\newtheorem{thm}{Theorem}
\newtheorem{prop}[thm]{Proposition}
\newtheorem{prob}[thm]{Problem}
\newtheorem{lem}[thm]{Lemma}
\newtheorem{example}[thm]{Example}
\newcommand{\IN}{\ensuremath{{\mathbb N}}}
\newcommand{\IZ}{\ensuremath{{\mathbb Z}}}
\newcommand{\chia}{\ensuremath{\chi_{g_A}}}
\newcommand{\chib}{\ensuremath{\chi_{g_B}}}
\newcommand{\gcn}[4]{\ensuremath{{}^{({#1},{#2})}\chi_{g#4}({#3})}}
\newcommand{\gcnt}[4]{\ensuremath{{}^{({#1},{#2})}\chi_{g#4}^{\Theta}({#3})}}
\newcommand{\gcna}[3]{\ensuremath{\gcn{#1}{#2}{#3}{_A}}}
\newcommand{\gcnb}[3]{\ensuremath{\gcn{#1}{#2}{#3}{_B}}}
\title{Colouring games based on autotopisms of Latin hyper-rectangles}
\author{Stephan Dominique Andres\\
{\small FernUniversit\"{a}t in Hagen, Fakult\"{a}t f\"{u}r Mathematik und Informatik}\\
{\small IZ, Universit\"{a}tsstr.\ 1, 58084 Hagen, Germany}\\
{\small \texttt{dominique.andres@fernuni-hagen.de}}\\
Ra\'ul M.\ Falc\'on\\
{\small  University of Seville, School of Building Engineering}\\
{\small  YOUR ADDRESS, Seville, Spain}\\
{\small  \texttt{rafalgan@us.es}}\\}
\begin{document}
\maketitle

%\noindent
%{\tiny{\bf Document history:}\\
%draft1: written by S.D.Andres\\
%draft2: written by R.Falcon\\
%draft3: written by S.D.Andres\\
%draft4: (this version) written by R.Falcon\\
%...}

\begin{abstract}
Every partial colouring of a Hamming graph is uniquely related to a partial Latin hyper-rectangle. In this paper we introduce the $\Theta$-stabilized $(a,b)$-colouring game for Hamming graphs, a variant of the $(a,b)$-colouring game so that each move must respect a given autotopism $\Theta$ of the resulting partial Latin hyper-rectangle. We examine the complexity of this variant by means of its chromatic number. We focus in particular on the bi-dimensional case, for which the game is played on the Cartesian product of two complete graphs, and also on the hypercube case.
\end{abstract}

\section{Introduction}

The {\em colouring game} dates back to an idea of Brams, which was published by Gardner~\cite{gardner} in 1981, and was popularised by Bodlaender \cite{Bodlaender1991} in 1991. Based on the graph colouring problem, this 
game is played on a finite graph by two players, Alice ($A$) and Bob ($B$), with Alice 
playing first. They must alternately colour some uncoloured vertex of the graph with a colour taken from a given set so  that none two adjacent vertices are coloured with the same colour. A move in the game consists, 
therefore, in colouring exactly one vertex at a time. If all vertices of the graph are coloured at the end of the game,  then Alice wins, otherwise Bob wins. Bodlaender dealt with the complexity of determining if there 
exists a winning strategy for one of the players. In this regard, he introduced the {\em game chromatic number} $\chi_g(G)$ as the least integer $k$ such that Alice has 
a winning strategy when the game is played on a graph $G$ by using $k$ colours. 
%If this does not exist, then the value of the respective game chromatic number is $\infty$.
Since Alice wins in any case whenever the game is played with $n$ colours on an $n$-vertex graph, the game chromatic number is a well-defined integer.
During the last decades many efforts using different methods from graph theory have been done to reduce the upper bound for the 
game chromatic number of planar graphs, cf.\ \cite{bartnickietal,kierstead,zhurefined}.

As the game may change significantly when Bob begins instead of Alice, later different authors \cite{andresforest,zhucartesian} distinguish between the game chromatic numbers $\chi_{g_A}(G)$ resp.\ $\chi_{g_B}(G)$
for the game where Alice begins resp.\ where Bob begins, the above notation was first used by Andres~\cite{andresgpg}. 

As a generalization of the colouring game, Kierstead \cite{Kierstead2005} introduced the {\em $(a,b)$-colouring game}, which assumes the rule that moves of Alice and Bob consist in 
colouring, respectively, $\min\{a,u\}$ and $\min\{b,u\}$ 
distinct uncoloured vertices, where $u$ denotes the number of uncoloured vertices before the move. We denote the $(a,b)$-colouring game by $g_A$ resp.\ $g_B$ depending on the rule 
whether Alice resp.\ Bob has to perform the first move.
If \mbox{$a=b=1$}, then $g_A$ is just the colouring game.  
The {\em $(a,b)$-game chromatic numbers} $\gcna{a}{b}{G}$ and $\gcnb{a}{b}{G}$ are then defined as the least integer $k$ such that 
Alice has a winning strategy when the respective $(a,b)$-colouring game  is played on a graph $G$ by using $k$ colours. 
%If this does not exist, then they take the value $\infty$. 
In 2009, Andres 
\cite{Andres2009} generalized this new game to digraphs. Shortly after,  Schlund \cite{Schlund2011} focused on 
%the digraph of 
partial Latin squares of a given order. Recall that a {\em partial Latin square of order} $n$ 
is an $n\times n$ array in which each cell is either empty or contains one  element chosen from a set of $n$ symbols, such that each symbol occurs at most once in each row and in each column. This is a {\em Latin square} 
if there are no empty cells in such an array. 
Schlund introduced the digraph  whose vertices are all possible partial Latin squares of order $n$ and where, given two such partial Latin squares, $P$ and $P'$, there exists 
a directed edge from $P$ to $P'$ if and only if $P$ is a subsquare of $P'$ and  $P'$ has exactly one more non-empty cell than $P$. 
He focused in particular on determining lower and upper bounds for the chromatic number 
of partial latin squares. In a more general way, it was Bose \cite{Bose1963} who introduced  the study of graphs related to Latin squares. In a recent paper, Besharati et al. \cite{Besharati2016} have studied the 
chromatic number 
of these graphs in case of dealing with Latin squares with a certain symmetric structure.  They have focused on the study of row-complete and circulant Latin squares.
Schlund also was the first one who considers the game chromatic number of latin squares, which is in the language of graph theory simply the game chromatic number of its rook's graphs, cf.\ Fig.~\ref{bildrooks}.

\begin{figure}[htbp]
\begin{center}
\includegraphics[scale=0.5]{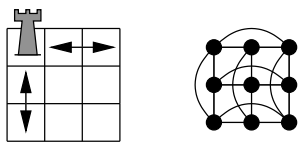}
\end{center}
\caption{\label{bildrooks}A latin square and its rook's graph}
\end{figure}

This paper deals with a natural generalization of Schlund's results to partial Latin hyper-rectangles having a given symmetry in their autotopism group. The structure of the paper is 
the following. In Section~\ref{secprelim} we expose some preliminary concepts and results on graphs and partial Latin hyper-rectangles that we use throughout the paper. The graph colouring game on the latter with regard to an autotopism is introduced in Section~\ref{secgame}. We focus in particular on the bi-dimensional case. Finally, in Section~\ref{secmodifiedgame} we study a modified game based on principal isotopisms which, by a central concept, the Orbit Contraction Lemma, is equivalent to the aforementioned game and leads in some important cases to a simplification of the analysis of the used strategies.

\section{Preliminaries}\label{secprelim}

In  this section we introduce some basic concepts, notations and results on graphs and partial Latin hyper-rectangles that are used throughout the paper. For more details about these topics we refer, respectively, to the 
monographs of Harary~\cite{Harary1969} resp.\ Diestel \cite{diestel} and D\'enes and Keedwell \cite{Denes1991}.

\pagebreak[3]

\subsection{Graph Theory}

 A {\em graph} is a pair $G=(V,E)$ formed by a set $V$ of {\em vertices} and a set $E$ of {\em edges} that contain two vertices. This is {\em vertex-weighted} if each one of its 
vertices has assigned a numerical value or 
{\em weight}. 
 The number $|V|$ of vertices of $G$ is its {\em order}. Two vertices that are contained in the same edge are said to be {\em adjacent}. This edge is then said to be {\em incident} to both vertices. The {\em degree} of a 
 vertex is the number of edges that are incident to such a vertex. The maximum vertex degree of the graph $G$ is denoted as $\Delta(G)$. A graph is said to be $k$-{\em regular} if all its vertices have the same degree 
$k$.  If any two vertices of $G$ are adjacent, then the graph is said to be {\em complete}. The complete graph of $n$ vertices is denoted as~$K_n$. The {\em contraction} of a pair of vertices of $G$ gives rise to a new 
graph  where both vertices and their incident edges are eliminated and replaced by a single vertex that is adjacent to all those vertices that were adjacent to the former.

A {\em $k$-partial vertex labeling} of $G$  is any map that assigns a set of $k$ labels to a subset of vertices of $V$. A {\em partial $k$-colouring} of $G$ is a {\em $k$-partial 
vertex labeling} of the graph 
%in which a 
% subset of vertices in $V$ are labeled or {\em coloured} by means of $k$ distinct colours in such a way that 
with the property that
none two adjacent vertices have the same label. 
The labels are also called \emph{colours}.
If none vertex is uncoloured, then a partial $k$-colouring is called a {\em 
 $k$-colouring} of the graph. The smallest number of colours that are required to determine one such a colouring of $G$ is its {\em chromatic number} $\chi(G)$. In particular, $\chi(G)\leq \Delta(G)+1$, for any graph 
 $G$. 
The problem of deciding whether the chromatic number of a graph is at most $k$ is NP-hard for $k\ge3$.
%computing this chromatic number is NP-hard and 
This problem is known as the {\em graph colouring problem}.

\pagebreak[4]

 The {\em Cartesian product} of two graphs $G_1=(V_1,E_1)$ and $G_2=(V_2,E_2)$ is the graph $G_1\Box G_2$ whose set of vertices coincides with the Cartesian product $V_1 \times V_2$ and where two distinct vertices 
 $(u_1,u_2)$ and $(v_1,v_2)$ are adjacent if and only if $u_i=v_i$ and $u_j$ is adjacent to $v_j$ in $G_j$ for some $i,j\in\{1,2\}$ with $i\neq j$. If, besides, there exists an 
edge whenever $u_i$ is adjacent to $v_i$ 
 in $G_i$, for all $i\leq 2$, then this constitutes the {\em strong product} $G_1\boxtimes G_2$ of $G_1$ and $G_2$. The {\em Hamming graph} is defined as the Cartesian product $\mathcal{H}_{n_1,\ldots,n_d}:=K_{n_1}\Box 
\ldots\Box K_{n_d}$. This is an $(n_1+\ldots+n_d-d)$-regular graph of order $\prod_{j=1}^dn_j$. The Hamming graph $\mathcal{H}_{3,3}$ is the rook's graph depicted in Figure~\ref{bildrooks}. For $d=2$, 
Schlund~\cite{Schlund2011} proved by a simple simulation argument (see loc. cit. Lemma 4.9) that
\[\gcnb{2a+1}{1}{\mathcal{H}_{n,n}}\le\gcnb{a}{1}{\mathcal{H}_{n,n}}.\]

The next lemma generalizes this result to arbitrary graphs and parameters.

\begin{lem}\label{lem0} Let $G$ be a finite graph, $X\in\{A,B\}$ and $a$, $b$ and $m$ be three positive integers greater than or equal to $1$. Then,
\[{}^{(ma+(m-1)b,b)}\chi_{g_X}(G)\leq {}^{(a,b)}\chi_{g_X}(G)\]
\end{lem}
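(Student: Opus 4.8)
The plan is to argue by \emph{simulation}: I will show that a winning strategy for Alice in the $(a,b)$-colouring game $g_X$ on $G$ with $k$ colours can be transformed into a winning strategy for Alice in the $(ma+(m-1)b,b)$-colouring game $g_X$ on $G$ using the same $k$ colours. Taking $k={}^{(a,b)}\chi_{g_X}(G)$ then gives ${}^{(ma+(m-1)b,b)}\chi_{g_X}(G)\le k$, which is the assertion. The engine of the transformation is the elementary identity $m\cdot a+(m-1)\cdot b=ma+(m-1)b$: one ``fast'' move of Alice, in which she must colour $ma+(m-1)b$ vertices, has exactly the right length to hold $m$ of Alice's moves and $m-1$ of Bob's moves of the slower game. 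So Alice will run a virtual copy of the slow game in her head, producing her own $m$ moves from the slow-game strategy and playing the $m-1$ interleaved Bob moves herself, while the real opponent's single fast move of size $b$ furnishes the last Bob move of the block.

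Let $\sigma$ denote Alice's winning strategy in the slow game. First I would fix the correspondence between the move sequences of the two games. If $X=A$, Alice begins both games; her first fast move reproduces the slow moves $a_1,b_1,a_2,b_2,\dots,a_{m-1},b_{m-1},a_m$ (she draws $a_1,\dots,a_m$ from $\sigma$ and chooses $b_1,\dots,b_{m-1}$ herself), the real Bob's first fast move then plays $b_m$, and the pattern repeats in blocks of $m$ slow rounds. If $X=B$, the real Bob opens both games with $b_1$, Alice's first fast move reproduces $a_1,b_2,a_2,\dots,b_m,a_m$ (with $b_2,\dots,b_m$ simulated), the real Bob next plays $b_{m+1}$, and so on. In either case every fast Alice move carries $m$ slow Alice moves and $m-1$ simulated slow Bob moves, and every fast Bob move is a single genuine slow Bob move. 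Throughout, Alice keeps her virtual slow board identical, cell by cell and colour by colour, to the real fast board; legality of each fast move of Alice follows because it is assembled from individually legal slow moves on distinct previously uncoloured vertices.

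The step I expect to need the most care is checking that the simulated Bob moves can always be executed legally. This rests on $\sigma$ being a \emph{winning} strategy: in any position consistent with $\sigma$ in which it is Bob's turn, Bob must possess at least one legal move colouring $\min\{b,u\}$ vertices, since otherwise the play would halt with some vertex uncoloured and Bob would win, contradicting that $\sigma$ defeats every Bob strategy. Hence Alice can always realise each interleaved Bob move by some legal choice, and because $\sigma$ wins against \emph{all} Bob play her particular (partly self-supplied, partly genuine) choices still lead to a fully coloured graph. The only remaining points are the end-game situations in which fewer than the nominal number of vertices remain; these are absorbed by the $\min\{\cdot,u\}$ rule, the virtual and real games terminating at the same instant with every vertex of $G$ coloured. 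Since the two boards coincide at all times and $\sigma$ colours all of $G$, Alice wins the fast game with $k$ colours, establishing ${}^{(ma+(m-1)b,b)}\chi_{g_X}(G)\le{}^{(a,b)}\chi_{g_X}(G)$.
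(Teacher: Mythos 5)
Your proposal is correct and follows essentially the same route as the paper's own proof: a simulation argument in which one of Alice's moves of size $ma+(m-1)b$ packages $m$ of her moves and $m-1$ self-played Bob moves from the $(a,b)$-game, with the real opponent's move of size $b$ supplying the remaining Bob move of each block. You simply spell out in detail (alignment of move sequences for $X=A$ and $X=B$, legality of the simulated Bob moves, end-game via the $\min\{\cdot,u\}$ rule) what the paper compresses into three sentences.
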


\begin{proof} Assume that Alice has a winning strategy for the $(a,b)$-colouring game on $G$. She uses this strategy for the
$(ma+(m-1)b,b)$-game on $G$. When she has to move on Bob's turns, she simulates Bob's move by an arbitrary move.
The number $ma+(m-1)b$ of her turns guarantees that Bob will not have any of her moves from the $(a,b)$-game in the $(ma+(m-1)b,b)$-game.
\end{proof}

\vspace{0.5cm}

\subsection{Partial Latin hyper-rectangles}

Let $d\geq 2$  be a positive integer. By a \emph{line} in an $(n_1\times\ldots\times n_d)$-array we mean the set of cells that is obtained if we fix each coordinate except for one.
An {\em $n_1\times\ldots\times n_d$ partial Latin hyper-rectangle} based on the set $[n]=\{1,\ldots,n\}$ is an $(n_1\times\ldots\times n_d)$-array that satisfies the so-called {\em 
Latin 
array condition}:  each cell is either empty or contains one symbol chosen from the set $[n]$ in such a way that each symbol occurs at most once in each line of the array. 
Its {\em dimension} is $d$. If $d=2$, then this 
corresponds to a {\em partial Latin rectangle} (a {\em partial Latin square} if $n_1=n_2=n$). For higher orders, if $n_1=\ldots=n_d$, then this corresponds to a {\em partial Latin hypercube}. If the array does not contain empty cells, then the adjective partial is eliminated in each one of the previous definitions. From here on, $\mathcal{PLH}_{n_1,\ldots,n_d,n}$ denotes the set of $n_1\times\ldots\times n_d$ partial Latin hyper-rectangles based on $[n]$. Figure \ref{Fig1} shows three partial Latin rectangles in the set $\mathcal{PLH}_{3,4,4}$.

\begin{figure}[htbp]
\begin{center}
$\begin{array}{ccccc}P_1\equiv\begin{array}{|c|c|c|c|} \hline
1 & \ & 4 & \ \\ \hline
\ & 2 & \ & \ \\ \hline
2 & \ & \ & 1 \\ \hline
\end{array} & \ & P_2\equiv\begin{array}{|c|c|c|c|} \hline
\ & \ & 2 & \ \\ \hline
\ & 1 & \ & 3 \\ \hline
1 & 2 & \ & \ \\ \hline
\end{array} & \ & P_3\equiv\begin{array}{|c|c|c|c|} \hline
1 & \ & 3 & \ \\ \hline
\ & 2 & \ & 4\\ \hline
2 & 3 & 4 & 1 \\ \hline
\end{array}\end{array}$
\caption{Partial Latin rectangles in the set $\mathcal{PLH}_{3,4,4}$.}
\label{Fig1}
\end{center}
\end{figure}

The set $\mathcal{PLH}_{n_1,\ldots,n_d,n}$ is uniquely identified with the set of partial $n$-colour\-ings of a vertex-labeled Hamming graph $\mathcal{H}_{n_1,\ldots,n_d}$. To see it, 
observe that every cell of a partial Latin hyper-rectangle $P\in \mathcal{PLH}_{n_1,\ldots,n_d,n}$ is uniquely identified with a tuple $(i_1,\ldots,i_d)\in [n_1]\times \ldots \times [n_d]$, where each $i_j$ represents the position of the cell under consideration in the $j^{\mathrm{th}}$ line of $P$. These tuples can be considered as the labels of the vertices of $\mathcal{H}_{n_1,\ldots,n_d}$ by taking into account that two such vertices are adjacent if and only if their corresponding labels in $[n_1]\times \ldots \times [n_d]$ differ exactly in one component. Each label indicates, therefore, the position in which is situated the cell of $P$ that is uniquely identified with the corresponding vertex of the Hamming graph. This cell is empty if and only if the mentioned vertex is uncoloured. Otherwise, the cell contains a symbol of the set $[n]$ that is identified with the corresponding colour of the vertex. Hence, colouring an uncoloured vertex in a Hamming graph is equivalent to fill with a symbol an empty cell in a partial Latin hyper-rectangle. We say in this case that the cell is {\em coloured} with that symbol. Figure \ref{Fig_Hamming} shows, for instance, the $4$-partial colouring of the labeled Hamming graph related to the partial Latin rectangle $P_1$ of Figure \ref{Fig1}. We have used the style $\bullet$ to represent uncoloured vertices and the styles $\blacktriangle$, $\blacklozenge$ and $\blacktriangledown$ to represent, respectively, those coloured vertices related to the symbols $1$, $2$ and $4$.

\begin{figure}[htbp]
\begin{center}
\includegraphics[width=4cm]{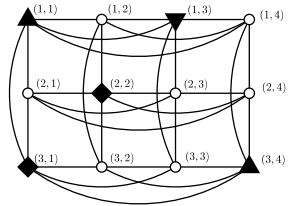}
\end{center}
\caption{Partial colouring of a labeled Hamming graph.}
\label{Fig_Hamming}
\end{figure}

 Every partial Latin hyper-rectangle $P\in \mathcal{PLH}_{n_1,\ldots,n_d,n}$ is represented by its {\em set of entries} $E(P)$, where an {\em entry} of $P$ is any $(d+1)$-tuple 
\[(i_1,\ldots,i_d,P(i_1,\ldots,i_d))\in [n_1]\times \ldots \times [n_d]\times [n].\] 
Here, $P(i_1,\ldots,i_d)$ denotes the symbol that appears in a given non-empty cell $(i_1,\ldots,i_d)$. If the set $E(P)$ is empty, then 
$P$ is called {\em trivial}. Further, if 
 $E(P)\subseteq E(Q)$, for some partial Latin hyper-rectangle $Q\in \mathcal{PLH}_{n_1,\ldots,n_d,n}$, then it is said that $P$ is {\em contained} in $Q$.

  Permutations of lines and symbols of $P$ give rise to new partial Latin hyper-rectangles in $\mathcal{PLH}_{n_1,\ldots,n_d,n}$ that are said to be {\em isotopic} to $P$. In this regard, let $S_m$ and 
 $\mathfrak{I}_{n_1,\ldots,n_d,n}$ respectively denote the symmetric group in $m$ elements and the direct product $S_{n_1}\times\ldots\times S_{n_d}\times S_n$. The isotopic partial Latin hyper-rectangle of $P$ according 
 to an {\em isotopism} $\Theta=(\pi_1,\ldots,\pi_d,\pi)\in \mathfrak{I}_{n_1,\ldots,n_d,n}$ is then denoted by $P^{\Theta}$ and is defined by
\begin{eqnarray*}
&&E(P^{\Theta})\\
&=&\{(\pi_1(i_1),\ldots,\pi_d(i_d),\pi(P(i_1,\ldots,i_d)))\mid\,(i_1,\ldots,i_d,P(i_1,\ldots,i_d))\in E(P)\}.
\end{eqnarray*} 

Hence, 
\[P^{\Theta}(\pi_1(i_1),\ldots,\pi_d(i_d))=\pi(P(i_1,\ldots,i_d)),\] 
for all $(i_1,\ldots,i_d,P(i_1,\ldots,i_d))\in E(P)$. If $\pi$ is the trivial permutation, 
 that is, if $\pi=\mathrm{Id}$, then the isotopism $\Theta$ is called {\em principal}. If \mbox{$P^{\Theta}=P$,} then the isotopism $\Theta$ is said to be an {\em autotopism} of $P$. The set of autotopisms of $P$ is 
 endowed of group structure with the componentwise composition of permutations. The set of non-trivial partial Latin hyper-rectangles having a given isotopism $\Theta\in\mathfrak{I}_{n_1,\ldots,n_d,n}$ in their 
 autotopism group is denoted as $\mathcal{PLH}_{\Theta}$. Observe, for instance, that the triple $((12)(3),(1234),(1)(2)(34))\in\mathfrak{I}_{3,4,4}$ is an isotopism between the partial Latin rectangles $P_1$ and $P_2$ 
 in Figure~\ref{Fig1}. Besides, $P_3\in\mathcal{PLH}_{((12)(3),(1234),(1234))}$.

There exist isotopisms $\Theta\in \mathfrak{I}_{n_1,\ldots,n_d,n}$ such that $\mathcal{PLH}_{\Theta}=\emptyset$. This is the case, for example, of the isotopism $((12),{\rm id},{\rm 
id})\in\mathfrak{I}_{2,2,2}$. Necessary conditions for isotopisms of (partial) Latin squares to be an autotopism are exposed in \cite{Falcon2012, Falcon2013, McKay2005, Sade1968,  Stones2012} and a classification of autotopisms of Latin squares of order $n\leq 17$ according to their cycle structures appear in \cite{Falcon2012, Stones2012}. Recall in this regard that the {\em cycle structure} of a permutation $\pi\in S_m$ is the expression  
\[z_{\pi}=m^{\lambda_m^{\pi}}\ldots 1^{\lambda_1^{\pi}},\] 
where $\lambda_l^{\pi}$ is the number of cycles of length $l$ in the decomposition of $\pi$ as a product of disjoint cycles. 
In practice, we only write those $l^{\lambda_l^{\pi}}$ for which $\lambda_l^{\pi}>0$. Besides, any term of the form $l^1$ is replaced by $l$. Thus, for instance, the cycle structure of the permutation $(123)(4)(567)\in S_7$ is $3^21$. Two permutations $\pi_1$ and $\pi_2$ in $S_m$ have the same cycle structure if and only if they are conjugate, that is, there exists a third permutation $\pi_3\in S_m$ such that $\pi_2=\pi_3\pi_1\pi_3^{-1}$. As a natural generalization, the {\em cycle structure} of an isotopism $\Theta\in \mathfrak{I}_{n_1,\ldots,n_d,n}$ is defined as the $(d+1)$-tuple $z_{\Theta}=(z_{\pi_1},\ldots,z_{\pi_d},z_{\pi})$. Thus, for instance, the cycle structure of the isotopism $((12)(34)(56),(123)(456),(123)(4)(5)(6))\in \mathfrak{I}_{6,6,6}$ is $(2^3,3^2,31^3)$. Similarly to permutations, two isotopisms have the same cycle structure if and only if they are conjugate. Furthermore, analogously to the case of (partial) Latin rectangles \cite{Falcon2013, Falcon2015, Stones2012}, the next result holds.

\begin{lem}\label{lem_CS} Let $\Theta$ be an isotopism in $\mathfrak{I}_{n_1,\ldots,n_d,n}$. The cardinality of the set $\mathcal{PLH}_{\Theta}$ only depends on the cycle structure of $\Theta$.
\end{lem}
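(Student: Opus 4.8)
The plan is to exploit the fact, stated just above, that two isotopisms share the same cycle structure precisely when they are conjugate in $\mathfrak{I}_{n_1,\ldots,n_d,n}$. Accordingly, it suffices to exhibit, for any two conjugate isotopisms $\Theta_1$ and $\Theta_2$, a bijection between $\mathcal{PLH}_{\Theta_1}$ and $\mathcal{PLH}_{\Theta_2}$, from which the equality of cardinalities follows.

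First I would establish the composition rule governing the action of isotopisms on a partial Latin hyper-rectangle. Writing $\Theta=(\pi_1,\ldots,\pi_d,\pi)$ and $\Theta'=(\pi_1',\ldots,\pi_d',\pi')$ and composing componentwise, a direct check on the defining relation $P^{\Theta}(\pi_1(i_1),\ldots,\pi_d(i_d))=\pi(P(i_1,\ldots,i_d))$ yields $(P^{\Theta})^{\Theta'}=P^{\Theta'\Theta}$; in particular the trivial isotopism acts as the identity. I would also record that an isotopism preserves the number of entries, $|E(P^{\Theta})|=|E(P)|$, so that $P^{\Theta}$ is non-trivial if and only if $P$ is.

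Now suppose $\Theta_2=\Psi\Theta_1\Psi^{-1}$ for some $\Psi\in\mathfrak{I}_{n_1,\ldots,n_d,n}$, and define the map $\varphi\colon P\mapsto P^{\Psi}$. If $P\in\mathcal{PLH}_{\Theta_1}$, so that $P^{\Theta_1}=P$, then the composition rule gives
\[(P^{\Psi})^{\Theta_2}=P^{\Theta_2\Psi}=P^{\Psi\Theta_1\Psi^{-1}\Psi}=P^{\Psi\Theta_1}=(P^{\Theta_1})^{\Psi}=P^{\Psi}.\]
Hence $P^{\Psi}$ admits $\Theta_2$ as an autotopism, and since $\varphi$ preserves non-triviality, $\varphi(P)\in\mathcal{PLH}_{\Theta_2}$. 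The analogous computation, using $\Theta_1=\Psi^{-1}\Theta_2\Psi$, shows that $Q\mapsto Q^{\Psi^{-1}}$ maps $\mathcal{PLH}_{\Theta_2}$ into $\mathcal{PLH}_{\Theta_1}$, and these two maps are mutually inverse by the composition rule. Thus $\varphi$ is the desired bijection and $|\mathcal{PLH}_{\Theta_1}|=|\mathcal{PLH}_{\Theta_2}|$.

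I expect the only delicate point to be keeping the direction of the composition rule consistent with the chosen conjugation convention; once $(P^{\Theta})^{\Theta'}=P^{\Theta'\Theta}$ is pinned down, the cancellation $\Psi\Theta_1\Psi^{-1}\Psi=\Psi\Theta_1$ in the displayed line is immediate, and the rest of the argument is purely formal.
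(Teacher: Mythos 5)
Your proposal is correct and follows essentially the same route as the paper: both reduce to the fact that equal cycle structure means conjugacy and then show that conjugation by $\Psi$ (the paper's $\Theta_2$) induces a bijection $P\mapsto P^{\Psi}$ between the two sets of stabilized partial Latin hyper-rectangles. You merely spell out the step the paper calls ``straightforwardly verified'' by making the composition rule $(P^{\Theta})^{\Theta'}=P^{\Theta'\Theta}$ explicit and exhibiting the inverse map, where the paper instead notes injectivity of $P\mapsto P^{\Theta_2}$ directly.
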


\begin{proof} Let $\Theta_1$ be an isotopism in $\mathfrak{I}_{n_1,\ldots,n_d,n}$ with the same cycle structure like that of $\Theta$. Then, $\Theta$ and $\Theta_1$ are conjugate and 
there 
exists $\Theta_2\in \mathfrak{I}_{n_1,\ldots,n_d,n}$ such that $\Theta_1=\Theta_2\Theta\Theta_2^{-1}$. It is straightforwardly verified that $\Theta$ is an autotopism of a given $P\in\mathcal{PLH}_{n_1,\ldots,n_d,n}$ if and only if $P^{\Theta_2}\in\mathcal{PLH}_{\Theta_1}$. Both sets $\mathcal{PLH}_{\Theta}$ and $\mathcal{PLH}_{\Theta_1}$ have, therefore, the same cardinality, because $P^{\Theta_2}\neq Q^{\Theta_2}$, for any two distinct partial Latin hyper-rectangles $P,Q\in\mathcal{PLH}_{\Theta}$.
\end{proof}

The {\em cell orbit} of a tuple $(i_1,\ldots,i_d)\in [n_1]\times\ldots\times [n_d]$ under the action of an isotopism $\Theta=(\pi_1,\ldots,\pi_d,\pi)\in \mathfrak{I}_{n_1,\ldots,n_d,n}$ is defined as the subset
\begin{equation}\label{eq_orbit}
\mathfrak{o}_{\Theta}((i_1,\ldots,i_d)):= \{(\pi^m_1(i_1),\ldots,\pi^m_d(i_d))\mid\, m\in\IN\}.
\end{equation}
This definition generalizes the notion of cell orbit that was introduced by Stones et al. \cite{Stones2012} for Latin squares. From here on, fixed a permutation $\pi\in S_m$ and a symbol $s\in [m]$, we denote by $l_{\pi,s}$ the length of the cycle $C$ in the unique decomposition of $\pi$ into disjoint cycles such that \mbox{$\pi(s)=C(s)$}. Thus, for instance, $l_{((12)(3),1)}=2$.

\begin{lem}\label{lem_card_orbit} The next results hold.
\begin{enumerate}[a)]
\item The cell orbit described in (\ref{eq_orbit}) coincides with the set
\[\mathfrak{o}_{\Theta}((i_1,\ldots,i_d)):= \{(\pi^m_1(i_1),\ldots,\pi^m_d(i_d))\mid\, 0<m\leq \mathrm{lcm}(l_{\pi_1,i_1},\ldots,l_{\pi_d,i_d})\},\]
whose elements are pairwise distinct.
\item Every isotopism $\Theta\in\mathfrak{I}_{n_1,\ldots,n_d,n}$ determines a partition of the set $[n_1]\times\ldots\times [n_d]$.
\end{enumerate}
\end{lem}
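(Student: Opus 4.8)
The plan is to regard the collection of component permutations as a single permutation acting diagonally on the set of cells. Concretely, I would introduce the bijection $\sigma\colon [n_1]\times\ldots\times[n_d]\to[n_1]\times\ldots\times[n_d]$ defined by $\sigma(i_1,\ldots,i_d)=(\pi_1(i_1),\ldots,\pi_d(i_d))$, so that the cell orbit in (\ref{eq_orbit}) is exactly the orbit of $(i_1,\ldots,i_d)$ under the cyclic group $\langle\sigma\rangle$. Everything then reduces to elementary facts about cyclic group actions on a finite set. Throughout I write $L:=\mathrm{lcm}(l_{\pi_1,i_1},\ldots,l_{\pi_d,i_d})$, recalling that $l_{\pi_j,i_j}$ is the length of the cycle of $\pi_j$ through $i_j$ and is therefore the \emph{least} positive integer with $\pi_j^{l_{\pi_j,i_j}}(i_j)=i_j$.

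For part a), I first observe that $\sigma^L(i_1,\ldots,i_d)=(i_1,\ldots,i_d)$: since $L$ is a common multiple of every $l_{\pi_j,i_j}$, each component satisfies $\pi_j^{L}(i_j)=i_j$. Hence the sequence $m\mapsto\sigma^m(i_1,\ldots,i_d)$ is periodic with period dividing $L$, which already shows that the infinite set (\ref{eq_orbit}) coincides with its finite truncation to $0<m\le L$, the original tuple being recovered at $m=L$. The core of part a) is the pairwise distinctness of the $L$ listed tuples, and this is the step I expect to carry the actual content. Supposing $\sigma^{m_1}(i_1,\ldots,i_d)=\sigma^{m_2}(i_1,\ldots,i_d)$ for $0<m_1<m_2\le L$, componentwise this gives $\pi_j^{m_2-m_1}(i_j)=i_j$ for every $j$, and by minimality of the cycle length each $l_{\pi_j,i_j}$ divides $m_2-m_1$. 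Therefore $L\mid(m_2-m_1)$, contradicting $0<m_2-m_1<L$. So the $L$ tuples are distinct and the orbit has exactly $L$ elements.

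For part b), I would verify that the relation $x\sim y$ defined by $y\in\mathfrak{o}_{\Theta}(x)$ is an equivalence relation on $[n_1]\times\ldots\times[n_d]$ whose classes are precisely the cell orbits; a partition is then immediate. Reflexivity is the containment $x\in\mathfrak{o}_{\Theta}(x)$ noted above, and transitivity follows from $\sigma^{k}\sigma^{m}=\sigma^{k+m}$. Symmetry is the only point needing a small remark: if $y=\sigma^m(x)$ then $x=\sigma^{L-m}(y)$ by the $L$-periodicity, and one checks that the value of $L$ computed from $y$ equals that computed from $x$, because $\pi_j^m(i_j)$ lies in the same cycle of $\pi_j$ as $i_j$ and hence has the same associated cycle length. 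Alternatively, I could bypass the relation entirely and simply invoke that the orbits of $\langle\sigma\rangle$ acting on the finite set of cells always partition it. The only genuine subtlety across the whole argument is the distinctness claim in part a), which rests on identifying $l_{\pi_j,i_j}$ as the exact order of $i_j$ under $\pi_j$ rather than merely as a period.
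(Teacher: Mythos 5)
Your proof is correct and takes essentially the same route as the paper's: both rest on identifying $l_{\pi_j,i_j}$ as the exact order of $i_j$ under $\pi_j$, and both obtain the partition in part b) from the observation that the orbit of a tuple coincides with the orbit of each of its elements (your $\langle\sigma\rangle$-action phrasing is just cleaner bookkeeping for this). The only substantive difference is that you spell out the divisibility step ($l_{\pi_j,i_j}\mid m_2-m_1$ for all $j$, hence $\mathrm{lcm}\mid m_2-m_1$) behind the pairwise-distinctness claim, which the paper dismisses as straightforward; that is indeed where the content lies, so this is a welcome addition rather than a deviation.
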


\begin{proof} Let $j\leq d$ be a positive integer. The first claim follows straightforwardly from the fact that $\pi_j^{l_{\pi_j,i_j}}(i_j)=i_j$ and $\pi_j^k(i_j)\neq\pi_j^l(i_j)$, for all pair of distinct positive integers $k,l\leq l_{\pi_j,i_j}$. The second claim holds because the cell orbit of a given tuple in $[n_1]\times\ldots\times [n_d]$ under the action of the isotopism $\Theta$ coincides with the cell orbit of each one of its elements.
\end{proof}

The partition of the set $[n_1]\times\ldots\times [n_d]$ into cell orbits under the action of an isotopism also determines a partition of the cells of any partial Latin hyper-rectangle in $\mathcal{PLH}_{n_1,\ldots,n_d,n}$. Thus, for instance, the partition of the set $[3]\times [4]\times [4]$ under the action of the isotopism $((12)(3),(1234),(1234))\in \mathfrak{I}_{3,4,4}$ is the set formed by the three cell orbits $\mathfrak{o}_1=\{(1,1),(2,2),(1,3),(2,4)\}$, $\mathfrak{o}_2=\{(2,1),(1,2),(2,3),(1,4)\}$ and $\mathfrak{o}_3=\{(3,1),(3,$ $2),(3,3),(3,4)\}$. Figure \ref{fig_orbits} illustrates the partition into cell orbits of the cells that this isotopism gives rise to any partial Latin rectangle in $\mathcal{PLH}_{3,4,4}$. In the figure, the cells related to each orbit have respectively been filled with by the symbols $\blacktriangle$, $\blacktriangledown$ and $\blacklozenge$.

\begin{figure}[htbp]
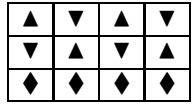

$$\begin{array}{|c|c|c|c|}
\hline
\blacktriangle&\blacktriangledown&\blacktriangle & \blacktriangledown\\ \hline
\blacktriangledown &\blacktriangle&\blacktriangledown & \blacktriangle\\ \hline
\blacklozenge&\blacklozenge&\blacklozenge & \blacklozenge\\
\hline
\end{array}$$
\caption{Orbit cells of any partial Latin rectangle in $\mathcal{PLH}_{3,4,4}$ by means of the isotopism $((12)(3),(1234),(1234))\in \mathfrak{I}_{3,4,4}$.} \label{fig_orbits}
\end{figure}

Let $\mathfrak{o}(\Theta)$ denote the partition of $[n_1]\times\ldots\times [n_d]$ by means of an isotopism \mbox{$\Theta\in\mathfrak{I}_{n_1,\ldots,n_d,n}$}. We say that a cell orbit 
in 
$\mathfrak{o}(\Theta)$ is {\em symbol-free} in a partial Latin hyper-rectangle $P\in\mathcal{PLH}_{n_1,\ldots,n_d,n}$ if all its elements correspond to empty cells in $P$. Otherwise, 
the cell orbit is said to be 
{\em marked}. It is called {\em complete} if all its elements correspond to non-empty cells in $P$. Thus, for instance, the previously mentioned cell orbits $\mathfrak{o}_1$ and 
$\mathfrak{o}_2$ are, respectively, complete and symbol-free in the partial Latin rectangle $P_3$ in Figure \ref{Fig1}. In the same figure, the cell orbit $\mathfrak{o}_1$ is marked in $P_2$, but it is not complete because the cell $(1,1)$ is empty. The next result follows straightforwardly from Lemma \ref{lem_card_orbit} and the notion of autotopism of a partial Latin hyper-rectangle.

\begin{prop}\label{prop_PLH} Let $\Theta=(\pi_1,\ldots,\pi_d,\pi)\in \mathfrak{I}_{n_1,\ldots,n_d,n}$ and $P\in\mathcal{PLH}_{n_1,\ldots,n_d,n}$. Then, $P\in\mathcal{PLH}_{\Theta}$ if and only if the next two conditions hold.
\begin{enumerate}[a)]
\item Every cell orbit in $P$ under the action of $\Theta$ is complete or symbol-free.
\item If $(i_1,\ldots,i_d)\in [n_1]\times\ldots\times [n_d]$ is a non-empty cell in $P$, then its cell orbit is formed by the non-empty cells $(\pi^m_1(i_1),\ldots,\pi^m_d(i_d))$, for 
every positive integer $m\leq \mathrm{lcm}(l_{\pi_1,i_1},\ldots,l_{\pi_d,i_d},l_{\pi,P(i_1,\ldots,i_d)})\}$, where
\begin{equation}\label{basiceqauto}
P(\pi_1^m(i_1),\ldots,\pi_d^m(i_d))=\pi^m(P(i_1,\ldots,i_d)).
\end{equation}
\end{enumerate}
\end{prop}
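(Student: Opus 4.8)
The plan is to prove the biconditional by first rewriting the defining equality $P^{\Theta}=P$ as a single-step condition on entries, and then propagating that condition along each cell orbit via Lemma~\ref{lem_card_orbit}. The key preliminary reduction I would carry out is this: the map $(i_1,\ldots,i_d,s)\mapsto(\pi_1(i_1),\ldots,\pi_d(i_d),\pi(s))$ is a bijection of $[n_1]\times\ldots\times[n_d]\times[n]$, so it sends $E(P)$ injectively onto $E(P^{\Theta})$ and in particular $|E(P^{\Theta})|=|E(P)|$. Consequently $P^{\Theta}=P$, i.e. $E(P^{\Theta})=E(P)$, holds if and only if $E(P^{\Theta})\subseteq E(P)$, which by the definition of $E(P^{\Theta})$ is exactly the assertion that for every non-empty cell $(i_1,\ldots,i_d)$ with symbol $s=P(i_1,\ldots,i_d)$, the cell $(\pi_1(i_1),\ldots,\pi_d(i_d))$ is non-empty and carries the symbol $\pi(s)$. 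This single-step condition is the pivot of both implications.

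For the forward direction I would assume $P\in\mathcal{PLH}_{\Theta}$ and iterate the single-step condition: for every entry and every $m\ge 0$ the cell $(\pi_1^m(i_1),\ldots,\pi_d^m(i_d))$ is non-empty with symbol $\pi^m(s)$. Combined with the explicit orbit description of Lemma~\ref{lem_card_orbit}a), this immediately gives condition b), including equation~(\ref{basiceqauto}). Condition a) then follows by a dichotomy applied to each orbit: an orbit either consists entirely of empty cells, hence is symbol-free, or it contains a non-empty cell, from which the iteration fills the entire orbit and makes it complete. No orbit can mix the two cases, since propagating a non-empty cell all the way around the orbit (using $\pi_j^{L}(i_j)=i_j$, where $L$ is the orbit length) would contradict the emptiness of any member cell.

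For the converse I would assume a) and b) and recover the single-step condition directly: given any entry $(i_1,\ldots,i_d,s)$, its cell is non-empty, so by a) its orbit is complete and by b) equation~(\ref{basiceqauto}) holds; taking $m=1$ yields $P(\pi_1(i_1),\ldots,\pi_d(i_d))=\pi(s)$, which is precisely the single-step condition, whence $P^{\Theta}=P$.

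The step I expect to be the main obstacle, and which I would flag explicitly, is the well-definedness of the symbol assignment underlying b). Since $P$ is a genuine array (a function on its non-empty cells), closing a cell orbit after $L=\mathrm{lcm}(l_{\pi_1,i_1},\ldots,l_{\pi_d,i_d})$ steps forces $\pi^{L}(s)=s$, i.e. $l_{\pi,s}\mid L$; this is exactly why the enlarged modulus $\mathrm{lcm}(l_{\pi_1,i_1},\ldots,l_{\pi_d,i_d},l_{\pi,s})$ in b) in fact equals $L$, so that equation~(\ref{basiceqauto}) attaches a single symbol to each cell rather than two conflicting ones. In the forward direction this compatibility comes for free from $P$ being a valid partial Latin hyper-rectangle, whereas in the converse it is precisely the consistency that condition b) is encoding. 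I would also note, as the one boundary case needing a separate remark, that $\mathcal{PLH}_{\Theta}$ excludes the trivial array while conditions a) and b) hold vacuously there, so the equivalence is really between $P^{\Theta}=P$ and a) plus b), with non-triviality imposed separately.
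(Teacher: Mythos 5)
Your argument is correct and follows the same route the paper intends: the paper gives no proof beyond asserting that the proposition ``follows straightforwardly from Lemma~\ref{lem_card_orbit} and the notion of autotopism,'' and your write-up is exactly that argument with the details supplied (the bijection/cardinality reduction of $P^{\Theta}=P$ to a one-step condition on entries, its iteration along cell orbits, and the complete-or-symbol-free dichotomy). The two points you flag --- that closing an orbit of length $L$ forces $\pi^{L}(s)=s$, hence $l_{\pi,s}\mid L$, so the enlarged lcm in~b) is consistent, and that the trivial array satisfies a) and b) vacuously while being excluded from $\mathcal{PLH}_{\Theta}$ by definition --- are genuine subtleties the paper glosses over and are worth keeping.
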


\vspace{0.5cm}

During the development of the colouring game based on an isotopism $\Theta=(\pi_1,\ldots,\pi_d,\pi)\in\mathfrak{I}_{n_1,\ldots,n_d,n}$ that is described in Section 3, Alice and Bob can deal with a partial Latin hyper-rectangle $P\in \mathcal{PLH}_{n_1,\ldots,n_d,n}$ with at least one cell orbit under the action of $\Theta$ that is neither complete nor symbol-free, but such that $E(P)\subset E(Q)$ for some $Q\in \mathcal{PLH}_{\Theta}$. Due to this fact, we introduce here the concepts of compatibility and feasibility.

\subsubsection{Compatibility}

We say that a tuple of positive integers $(i_1,\ldots,i_k)\in\mathbb{N}^k$ of {\em weight} $\sum_{j\leq k}i_j$ is {\em lcm-compatible} if the least common multiple of any $k-1$ 
elements in the ordered set $\{i_1,\ldots,i_k\}$ coincides with the least common multiple of all of them. Hereafter, we denote by $\mathcal{C}_k$ the set of lcm-compatible $k$-tuples. 
Thus, for instance, the tuple $(1,2,4,4)$ belongs to $\mathcal{C}_4$ because $\mathrm{lcm}(1,2,4)=\mathrm{lcm}(1,4)=\mathrm{lcm}(2,4)=4$. Similarly to the case of (partial) Latin 
rectangles (cf.~\cite{Falcon2013, Falcon2015, Stones2012}), the next result characterizes the autotopisms of the set $\mathcal{PLH}_{n_1,\ldots,n_d,n}$.

\pagebreak[3]

\begin{prop}\label{prop_PLH_0} Let $\Theta=(\pi_1,\ldots,\pi_d,\pi)$ be an isotopism in $\mathfrak{I}_{n_1,\ldots,n_d,n}$. A tuple $(i_1,\ldots,i_d,i)\in [n_1]\times\ldots\times 
[n_d]\times [n]$ can be the entry of a partial Latin hyper-rectangle in $\mathcal{PLH}_{\Theta}$ if and only if $(l_{\pi_1,i_1},\ldots,l_{\pi_d,i_d},l_{\pi,i}) \in\mathcal{C}_{d+1}$. 
If this is the case, then 
\[\{(\pi_1^m(i_1),\ldots,\pi_d^m(i_d),\pi^m(i))\mid\, 0<m\leq \mathrm{lcm}(l_{\pi_1,i_1},\ldots,l_{\pi_d,i_d},l_{\pi,i})\}\] 
constitutes the set of entries of a partial Latin 
hyper-rectangle in $\mathcal{PLH}_{\Theta}$.
\end{prop}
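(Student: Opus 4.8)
The plan is to translate the purely combinatorial condition $(l_{\pi_1,i_1},\ldots,l_{\pi_d,i_d},l_{\pi,i})\in\mathcal{C}_{d+1}$ into the two structural requirements that a set of entries must meet in order to form a partial Latin hyper-rectangle admitting $\Theta$ as an autotopism: that the symbol assigned to each cell is single-valued, and that the Latin array condition holds on every line. Throughout I would write $L:=\mathrm{lcm}(l_{\pi_1,i_1},\ldots,l_{\pi_d,i_d})$ and $L':=\mathrm{lcm}(l_{\pi_1,i_1},\ldots,l_{\pi_d,i_d},l_{\pi,i})$, and record the elementary dictionary, immediate from Lemma~\ref{lem_card_orbit}, that $\pi_k^m(i_k)=\pi_k^{m'}(i_k)$ if and only if $l_{\pi_k,i_k}\mid(m-m')$, and likewise $\pi^m(i)=\pi^{m'}(i)$ if and only if $l_{\pi,i}\mid(m-m')$. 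The whole argument then reduces to running a single divisibility computation in both directions.

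For necessity, suppose $(i_1,\ldots,i_d,i)$ is an entry of some $P\in\mathcal{PLH}_{\Theta}$. By Proposition~\ref{prop_PLH}(b) and (\ref{basiceqauto}) we have $P(\pi_1^m(i_1),\ldots,\pi_d^m(i_d))=\pi^m(i)$ for every admissible $m$. Taking $m=L$ returns the cell to $(i_1,\ldots,i_d)$, so single-valuedness of $P$ forces $\pi^L(i)=i$, that is $l_{\pi,i}\mid L$; this is precisely the lcm-compatibility clause obtained by deleting the symbol coordinate. For each cell coordinate $j$ I would argue by contradiction: if deleting $l_{\pi_j,i_j}$ strictly lowered the lcm, then the exponent $L_j:=\mathrm{lcm}(\{l_{\pi_k,i_k}:k\neq j\}\cup\{l_{\pi,i}\})$ fixes the symbol and every cell coordinate except the $j$-th while moving the $j$-th, producing a second entry that agrees with $(i_1,\ldots,i_d,i)$ in the symbol and in all coordinates but the $j$-th. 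This places the symbol $i$ twice on one line of $P$, contradicting the Latin array condition. Hence no single deletion changes the lcm, and the tuple lies in $\mathcal{C}_{d+1}$.

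For sufficiency, assume the tuple is lcm-compatible and take $E(P)$ to be the displayed orbit set; deleting the symbol coordinate gives $L=L'$, so $E(P)$ has exactly $L$ distinct cells. I would check single-valuedness first: if two parameters $m,m'$ yield the same cell, then $l_{\pi_k,i_k}\mid(m-m')$ for all $k$, hence $L\mid(m-m')$, whence $l_{\pi,i}\mid L\mid(m-m')$ and the symbols agree. For the Latin condition on line $j$, if two entries share the symbol and all cell coordinates except the $j$-th, then $L_j\mid(m-m')$; lcm-compatibility gives $L_j=L'$ and $l_{\pi_j,i_j}\mid L'$, forcing the $j$-th coordinates to coincide as well, so no repetition arises. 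Finally, applying $\Theta$ to the entry indexed by $m$ produces the entry indexed by $m+1$ (reduced modulo $L'$), so $E(P)$ is $\Theta$-invariant; since its single cell orbit is complete, Proposition~\ref{prop_PLH} yields $P\in\mathcal{PLH}_{\Theta}$.

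The main obstacle is the bookkeeping in the necessity step: making the contradiction watertight requires that the exponent $L_j$ genuinely fixes the symbol and all but one cell coordinate while displacing the remaining one, which is exactly where the definition of $l_{\pi,s}$ as the length of the cycle through $s$ and the divisibility characterisation of coincidences must be used in tandem. Once the dictionary ``deleting the symbol coordinate $\leftrightarrow$ single-valuedness'' and ``deleting cell coordinate $j$ $\leftrightarrow$ Latin condition on line $j$'' is in place, both implications follow from the same computation read forwards and backwards.
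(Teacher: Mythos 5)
Your proof is correct and follows essentially the same route as the paper's: both arguments track the orbit of the entry under powers of $\Theta$ and convert the two possible failures of lcm-compatibility into, respectively, a cell receiving two distinct symbols (deletion of the symbol component) and a symbol repeated on a line (deletion of a cell component). Your write-up is in fact more complete, since the paper only argues necessity explicitly and leaves the sufficiency direction and the verification that the orbit set itself lies in $\mathcal{PLH}_{\Theta}$ implicit, whereas you carry out both divisibility checks in full.
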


\begin{proof} Let $(i_1,\ldots,i_d,i)$ be an entry of some \mbox{$P\in\mathcal{PLH}_{\Theta}$}. From Lemma \ref{lem_card_orbit}, $\{(\pi_1^m(i_1),\ldots,$ 
$\pi_d^m(i_d),\pi^m(i)\mid\, m\in\mathbb{N}\}$ is a subset of $E(P)$ that is related to exactly $\mathrm{lcm}(l_{\pi_1,i_1},\ldots,l_{\pi_d,i_d})$ distinct cells of $P$. From the Latin array condition, this coincides with the least common multiple of the tuple that results after replacing any of the components $l_{\pi_j,i_j}$, with $j\leq d$, by $l_{\pi,i}$ and hence, with $\mathrm{lcm}(l_{\pi_1,i_1},\ldots,l_{\pi_d,i_d},l_{\pi,i})$. This is equivalent to say that the tuple $(l_{\pi_1,i_1},\ldots,l_{\pi_d,i_d},l_{\pi,i})$ belongs to $\mathcal{C}_{d+1}$. Otherwise, either there would appear twice the symbol $i$ in a same line of the array $P$ or there would exist a cell with at least two distinct assigned symbols, which is not possible.
\end{proof}

Let $\Theta=(\pi_1,\ldots,\pi_d,\pi)\in \mathfrak{I}_{n_1,\ldots,n_d,n}$. We say that a partial Latin hyper-rectangle $P\in \mathcal{PLH}_{n_1,\ldots,n_d,n}$ is {\em $\Theta$-compatible} if the next two conditions hold.
\begin{enumerate}[C.1)]
\item The tuple $(l_{\pi_1,i_1},\ldots,l_{\pi_d,i_d},l_{\pi,P(i_1,\ldots,i_d)})$ belongs to $\mathcal{C}_{d+1}$, for every non-empty cell $(i_1,$ $\ldots,i_d)$ in $P$.
\item The cell $(\pi_1^m(i_1),\ldots,\pi_d^m(i_d))$ in $P$ is either empty or Condition (\ref{basiceqauto}) holds, for every non-empty cell $(i_1,\ldots,i_d)$ in $P$ and for every 
positive integer $m\in \mathbb{N}$.
\end{enumerate}

The reasoning exposed in the first part of the proof of Proposition \ref{prop_PLH_0} enables us to ensure that every partial Latin hyper-rectangle in $\mathcal{PLH}_{\Theta}$ is $\Theta$-compatible. Nevertheless, the reciprocal does not hold in general. Observe, for instance, that the partial Latin rectangles $P_2$ and $P_3$ in Figure \ref{Fig1} are $((12)(3),(1234),(1234))$-compatible, whereas $P_1$ is not. Besides, $P_3$ belongs to $\mathcal{PLH}_{((12)(3),(1234),(1234))}$, whereas $P_2$ does not. The next result characterizes the set $\mathcal{PLH}_{\Theta}$ by means of $\Theta$-compatibility. This follows straightforwardly from Propositions \ref{prop_PLH} and \ref{prop_PLH_0}.

\begin{thm}\label{thm_PLH} Let $\Theta\in \mathfrak{I}_{n_1,\ldots,n_d,n}$. Then, $\mathcal{PLH}_{\Theta}\neq \emptyset$ if and only if there exists 
$P\in\mathcal{PLH}_{n_1,\ldots,n_d,n}$ that is $\Theta$-compatible and for which for all its entries (\ref{basiceqauto}) holds.
\end{thm}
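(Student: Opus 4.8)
The plan is to prove the two implications separately, in each case translating the global membership statement $P\in\mathcal{PLH}_{\Theta}$ into the cell-orbit conditions a) and b) of Proposition~\ref{prop_PLH} and matching these against the local conditions C.1 and C.2 that define $\Theta$-compatibility. One modelling point must be fixed first: the witness $P$ on the right-hand side has to be read as \emph{non-trivial}, since for a trivial array conditions C.1, C.2 and (\ref{basiceqauto}) all hold vacuously and the equivalence would collapse against examples such as $((12),\mathrm{id},\mathrm{id})\in\mathfrak{I}_{2,2,2}$ with $\mathcal{PLH}_{\Theta}=\emptyset$. The non-triviality is exactly what the forward direction will produce, so the asymmetry is harmless.

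For the forward implication I would suppose $\mathcal{PLH}_{\Theta}\neq\emptyset$ and pick a non-trivial $P\in\mathcal{PLH}_{\Theta}$. The remark following Proposition~\ref{prop_PLH_0} (justified by the first part of that proposition's proof) already guarantees that $P$ is $\Theta$-compatible, so C.1 and C.2 hold. Moreover, Proposition~\ref{prop_PLH} asserts that every cell orbit of $P$ is complete or symbol-free; hence each marked orbit is complete, the image cell $(\pi_1^m(i_1),\ldots,\pi_d^m(i_d))$ of a non-empty cell is never empty, and condition b) forces (\ref{basiceqauto}) to hold for every entry. Thus $P$ is a $\Theta$-compatible array on which (\ref{basiceqauto}) holds throughout $E(P)$, which is precisely the right-hand side.

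For the backward implication I would take a non-trivial $\Theta$-compatible $P$ on which (\ref{basiceqauto}) holds for all entries and show $P\in\mathcal{PLH}_{\Theta}$ by verifying conditions a) and b) of Proposition~\ref{prop_PLH}. Requiring (\ref{basiceqauto}) for every entry forces each image cell $(\pi_1^m(i_1),\ldots,\pi_d^m(i_d))$ of a non-empty cell to be non-empty and to carry the symbol $\pi^m(P(i_1,\ldots,i_d))$; hence the cell orbit of any non-empty cell consists entirely of non-empty cells with the symbols dictated by (\ref{basiceqauto}). This is condition b), and it immediately yields condition a), since any marked cell orbit is then complete. The remaining point is that the index range in b) uses $\mathrm{lcm}(l_{\pi_1,i_1},\ldots,l_{\pi_d,i_d},l_{\pi,P(i_1,\ldots,i_d)})$, which by C.1 equals the pure cell-orbit length $\mathrm{lcm}(l_{\pi_1,i_1},\ldots,l_{\pi_d,i_d})$ of Lemma~\ref{lem_card_orbit}, so no extra cells are demanded. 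Proposition~\ref{prop_PLH} then gives $P\in\mathcal{PLH}_{\Theta}$, and the set is non-empty.

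The step I expect to be the crux is the internal consistency of the symbol assignment around a closed cell orbit. By Lemma~\ref{lem_card_orbit} the positions return to their start after $M:=\mathrm{lcm}(l_{\pi_1,i_1},\ldots,l_{\pi_d,i_d})$ steps, whereas iterating (\ref{basiceqauto}) would reassign the symbol $\pi^M(P(i_1,\ldots,i_d))$ to the original cell; single-valuedness demands $\pi^M(P(i_1,\ldots,i_d))=P(i_1,\ldots,i_d)$, that is $l_{\pi,P(i_1,\ldots,i_d)}\mid M$. This divisibility is exactly what lcm-compatibility C.1 supplies, because $(l_{\pi_1,i_1},\ldots,l_{\pi_d,i_d},l_{\pi,P(i_1,\ldots,i_d)})\in\mathcal{C}_{d+1}$ forces $\mathrm{lcm}(l_{\pi_1,i_1},\ldots,l_{\pi_d,i_d})=\mathrm{lcm}(l_{\pi_1,i_1},\ldots,l_{\pi_d,i_d},l_{\pi,P(i_1,\ldots,i_d)})$. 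Verifying this closure orbit by orbit is the one place where C.1 and Proposition~\ref{prop_PLH_0} are used substantively rather than merely quoted; everything else reduces to unwinding the definitions.
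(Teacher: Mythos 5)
Your proof is correct and is essentially the fleshed-out version of the route the paper intends: the paper offers nothing beyond the assertion that the theorem ``follows straightforwardly from Propositions~\ref{prop_PLH} and \ref{prop_PLH_0}'', and your two implications carry out exactly that reduction, matching conditions C.1--C.2 against conditions a)--b) of Proposition~\ref{prop_PLH} and using C.1 to reconcile the two index ranges. Your preliminary remark that the witness $P$ must be read as non-trivial --- since the trivial array satisfies the right-hand side vacuously while, e.g., $\mathcal{PLH}_{((12),\mathrm{id},\mathrm{id})}=\emptyset$ --- is a genuine and correct repair of the statement that the paper glosses over.
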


\subsubsection{Feasibility}

We say that an isotopism $\Theta=(\pi_1,\ldots,\pi_d,\pi)\in \mathfrak{I}_{n_1,\ldots,n_d,n}$ is {\em feasible} for a colouring game based on the Hamming graph 
$\mathcal{H}_{n_1,\ldots,n_d}$, or shortly, {\em feasible}, if $(l_1,\ldots,l_d,\ell)\in\mathcal{C}_{d+1}$ for every tuple $(l_1,\ldots,l_d,\ell)\in [n_1]\times\ldots\times
[n_d]\times [n]$ such that $\lambda_{\ell}^{\pi}\cdot \prod_{j=1}^d\lambda_{l_j}^{\pi_j}>0$. Thus, for instance, the isotopism 
\[((12)(34)(56),(123)(456),(123456),(123)(45)(6))\in\mathfrak{I}_{6,6,6,6}\] 
is feasible because its cycle structure is $(2^3,3^2,$ $6,321)$ and the tuples $(2,3,6,3)$, $(2,3,6,2)$, 
and $(2,3,6,1)$ belong to $\mathcal{C}_4$. The next result follows straightforwardly from the just exposed notion of feasibility.

\begin{lem}\label{lem_feasible} Let $\Theta=(\pi_1,\ldots,\pi_d,\pi)\in\mathfrak{I}_{n_1,\ldots,n_d,n}$. The next results hold.
\begin{enumerate}[a)]
\item If $\Theta$ is feasible and $\prod_{j=1}^d\lambda_1^{\pi_j}>0$, then $\pi$ is the trivial permutation in $S_n$.
 \item If the cycle structure of $\Theta$ is $z_{\Theta}=(l_1^{n_1/l_1},\ldots, l_d^{n_d/l_d}, {\ell}^{n/\ell})$ for some $(d+1)$-tuple $(l_1,\ldots,l_d,\ell)\in 
[n_1]\times\ldots\times [n_d]\times [n]$, then $\Theta$ is feasible if and only if $(l_1,\ldots,l_d,\ell)\in\mathcal{C}_{d+1}$.
\end{enumerate}
\end{lem}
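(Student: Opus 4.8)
The plan is to unwind the definition of feasibility in each part and read the conclusion directly off the defining property of $\mathcal{C}_{d+1}$; beyond the definitions of feasibility, of the cycle counts $\lambda_l^{\pi}$, and of lcm-compatibility, no further machinery should be required.

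For part a) I would argue by contradiction. First I would note that $\prod_{j=1}^d \lambda_1^{\pi_j} > 0$ says precisely that each $\pi_j$ has a cycle of length $1$, i.e.\ a fixed point. Assuming $\pi \neq \mathrm{Id}$, it has a cycle of some length $\ell > 1$, so $\lambda_\ell^{\pi} > 0$. Then I would form the tuple $(1,\ldots,1,\ell)$, taking a fixed point from each $\pi_j$ together with this $\ell$-cycle from $\pi$; since $\lambda_\ell^{\pi} \cdot \prod_{j=1}^d \lambda_1^{\pi_j} > 0$, feasibility forces $(1,\ldots,1,\ell) \in \mathcal{C}_{d+1}$. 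The decisive computation is then to observe that deleting the last coordinate leaves $\mathrm{lcm}(1,\ldots,1) = 1$, whereas $\mathrm{lcm}(1,\ldots,1,\ell) = \ell$; lcm-compatibility demands these coincide, forcing $\ell = 1$, a contradiction. Hence every cycle of $\pi$ has length $1$ and $\pi = \mathrm{Id}$.

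For part b) the key observation is that the prescribed cycle structure collapses the universal quantifier in the definition of feasibility to a single instance. I would spell out that $z_{\Theta} = (l_1^{n_1/l_1},\ldots,l_d^{n_d/l_d},\ell^{n/\ell})$ means $\lambda_{l_j}^{\pi_j} = n_j/l_j > 0$ with $\lambda_m^{\pi_j} = 0$ for every $m \neq l_j$, and likewise $\lambda_\ell^{\pi} > 0$ with $\lambda_m^{\pi} = 0$ for $m \neq \ell$. It follows that the product $\lambda_m^{\pi} \cdot \prod_{j=1}^d \lambda_{m_j}^{\pi_j}$ is nonzero for exactly one tuple, namely $(m_1,\ldots,m_d,m) = (l_1,\ldots,l_d,\ell)$. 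The feasibility condition therefore reduces to the single requirement $(l_1,\ldots,l_d,\ell) \in \mathcal{C}_{d+1}$, which delivers both implications simultaneously.

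I do not expect a genuine obstacle here, since both parts follow immediately from the definitions. The one step worth singling out is in part a): the insight that padding with all $l_j = 1$ is the extremal choice that trivialises the least common multiple of the remaining coordinates and thereby isolates $\ell$, turning lcm-compatibility into the equation $\ell = 1$.
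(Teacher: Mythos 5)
Your proposal is correct, and it is exactly the argument the paper has in mind: the lemma is stated there with no written proof beyond the remark that it ``follows straightforwardly from the just exposed notion of feasibility,'' and your unwinding of the definitions---testing the tuple $(1,\ldots,1,\ell)$ against lcm-compatibility in part (a), and observing that the single-cycle-length structure collapses the quantifier to the one tuple $(l_1,\ldots,l_d,\ell)$ in part (b)---is precisely the intended routine verification. Nothing further is needed.
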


\vspace{0.1cm}

We define an \emph{extension} of {\em size} $n'\geq n$ of an isotopism $\Theta=(\pi_1,\ldots,\pi_d,\pi)\in \mathfrak{I}_{n_1,\ldots,n_d,n}$ as another isotopism 
$\Theta'=(\pi_1,\ldots,\pi_d,\pi')\in \mathfrak{I}_{n_1,\ldots,n_d,n'}$, such that $\pi'(s)=\pi(s)$, for all $s\leq n$. An extension is called \emph{natural} if $\pi'(s)=s$, for all 
$n<s\leq n'$. The 
isotopism $\Theta$ is, therefore, a trivial natural extension of itself. We say that an isotopism is \emph{extendable} if all its natural extensions are feasible. Particularly, extendability involves feasibility. The next results deep further into both concepts.

\begin{prop}\label{prop_Board} If a natural extension of an isotopism is feasible, then the latter is feasible. This is extendable whenever the former is not trivial or the isotopism is principal. Particularly, every feasible principal isotopism is extendable.
\end{prop}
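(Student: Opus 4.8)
The plan is to reduce everything to a single observation about how a natural extension acts on the symbol permutation. Write $\Theta=(\pi_1,\ldots,\pi_d,\pi)$ and let $\Theta'=(\pi_1,\ldots,\pi_d,\pi')$ be the natural extension of size $n'$. Since $\pi'$ agrees with $\pi$ on $[n]$ and fixes every symbol in $\{n+1,\ldots,n'\}$, the lengths occurring among the cycles of $\pi'$ are exactly those occurring in $\pi$, together with the length $1$ when $n'>n$. Because feasibility refers only to \emph{which} cycle lengths occur, i.e.\ to the indices $\ell$ with $\lambda_\ell^{\pi}>0$, and not to their multiplicities, the family of tuples $(l_1,\ldots,l_d,\ell)$ that must lie in $\mathcal{C}_{d+1}$ for $\Theta$ is contained in the corresponding family for $\Theta'$. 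This immediately yields the first assertion: if every tuple relevant to $\Theta'$ is lcm-compatible, then so is every tuple relevant to $\Theta$, that is, $\Theta$ is feasible.

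For the remaining assertions I would first isolate the only genuinely new constraint that a non-trivial natural extension can impose. The key elementary fact is that for cycle lengths $l_1,\ldots,l_d$ one has $(l_1,\ldots,l_d,1)\in\mathcal{C}_{d+1}$ if and only if $(l_1,\ldots,l_d)\in\mathcal{C}_d$: the entry $1$ divides every integer, so it never contributes to any least common multiple, whence deleting it leaves every $\mathrm{lcm}$ unchanged and the only deletions that matter are those of the $l_j$. Thus passing from the trivial extension to a non-trivial one adds precisely the requirement that every cycle-length tuple $(l_1,\ldots,l_d)$ arising from $\pi_1,\ldots,\pi_d$ be lcm-compatible.

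Next I would exploit that all non-trivial natural extensions of $\Theta$ share the same set of occurring cycle lengths, namely those of $\pi$ augmented by $1$; hence, by the multiplicity-independence of feasibility noted above, they are feasible or infeasible simultaneously. This settles the case in which $\Theta'$ is non-trivial: then feasibility of $\Theta'$ forces every non-trivial natural extension to be feasible, while the trivial extension $\Theta$ is feasible by the first assertion, so all natural extensions are feasible and $\Theta$ is extendable. For the principal case, $\pi=\mathrm{Id}$ has only cycles of length $1$, so adjoining further fixed points changes neither the set of occurring symbol-cycle lengths nor, therefore, the feasibility condition; consequently every natural extension of a principal $\Theta$ is feasible exactly when $\Theta$ is, and feasibility of $\Theta$ (again via the first assertion) gives extendability. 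The final sentence is then the special case $\Theta'=\Theta$ of this principal branch: a feasible principal isotopism is a trivial feasible natural extension of itself, so it is extendable.

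The step I expect to require the most care is not any single deduction but the bookkeeping distinguishing whether $\lambda_1^{\pi}>0$. When $\pi$ already fixes a symbol, a non-trivial extension introduces nothing new to the set of occurring lengths and extendability is equivalent to feasibility; when $\pi$ has no fixed point, the length $1$ is genuinely new and brings in the extra condition $(l_1,\ldots,l_d)\in\mathcal{C}_d$ via the lemma above. The content of the second assertion is exactly that this extra condition is automatically available in each of the two stated situations — because a feasible non-trivial extension already encodes it, and because a principal isotopism never triggers it.
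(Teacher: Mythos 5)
Your proof is correct and follows essentially the same route as the paper's: both arguments rest on the observation that feasibility depends only on the set of occurring cycle lengths, that a natural extension can only add the symbol-cycle length $1$, and that the resulting extra condition is already guaranteed either by the feasibility of a non-trivial extension or automatically in the principal case. Your auxiliary remark that $(l_1,\ldots,l_d,1)\in\mathcal{C}_{d+1}$ iff $(l_1,\ldots,l_d)\in\mathcal{C}_d$ is the fact the paper itself records only later, in the proof of Theorem~\ref{thm_Board}, but it does not change the substance of the argument.
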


\begin{proof} Let $\Theta=(\pi_1,\ldots,\pi_d,\pi)$ and $\Theta'=(\pi_1,\ldots,\pi_d,\pi')$ be respective isotopisms in $\mathfrak{I}_{n_1,\ldots,n_d,n}$ and 
$\mathfrak{I}_{n_1,\ldots,n_d,n'}$ such that $\Theta'$ is feasible and a natural extension of~$\Theta$. Since $\pi'(s)=\pi(s)$, for all $s\leq n$, the feasibility of $\Theta'$ 
involves that of~$\Theta$. Under these conditions, it is straightforwardly verified that, if $\pi=\mathrm{Id}$, then $\Theta$ is extendable. In the general case, suppose that $n'>n$. 
Then, $\lambda_1^{\pi'}>0$ and Lemma~\ref{lem_feasible} involves that $(l_1,\ldots,l_d,1)\in\mathcal{C}_{d+1}$, for all $(l_1,\ldots,l_d)\in [n_1]\times\ldots \times [n_d]$ such that 
$\prod_{j=1}^d\lambda_{l_j}^{\pi_j}>0$. This condition is shared by any natural extension of $\Theta$, which becomes, therefore, extendable. The last assertion follows immediately 
from the fact that every isotopism is a trivial natural extension of itself.
\end{proof}

\vspace{0.1cm}

\begin{thm}\label{thm_Board} An isotopism $(\pi_1,\ldots,\pi_d,\pi)\in\mathfrak{I}_{n_1,\ldots,n_d,n}$ is extendable if and only if it is feasible and one of the next two assertions hold.
\begin{enumerate}[a)]
\item $d=2$ and all the cycles in the unique decompositions of $\pi_1$ and $\pi_2$ into a product of disjoint cycles have the same length.
\item $d>2$ and the isotopism $(\pi_1,\ldots,\pi_d)\in\mathfrak{I}_{n_1,\ldots,n_d}$ is feasible.
\end{enumerate}
\end{thm}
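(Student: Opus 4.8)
The plan is to reduce extendability to a pair of feasibility conditions and then translate those conditions into the concrete statements a) and b). Since every isotopism is a trivial natural extension of itself, extendability immediately forces feasibility, so the whole content lies in identifying the extra requirement imposed by the \emph{nontrivial} natural extensions. The first observation I would isolate is that a natural extension $\Theta'=(\pi_1,\ldots,\pi_d,\pi')$ of size $n'>n$ is obtained from $\Theta$ by appending $n'-n$ fixed points to $\pi$; consequently the set of cycle lengths occurring in $\pi'$ is exactly the set of cycle lengths of $\pi$ together with the length $1$. Applying the definition of feasibility to $\Theta'$ then splits into the lengths already present in $\pi$ -- which reproduce precisely the feasibility of $\Theta$ -- and the single new length $\ell=1$, which contributes the requirement that $(l_1,\ldots,l_d,1)\in\mathcal{C}_{d+1}$ for every tuple $(l_1,\ldots,l_d)$ with $\prod_{j=1}^d\lambda_{l_j}^{\pi_j}>0$.

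Next I would record the elementary arithmetical fact that, because $1$ never affects a least common multiple, $(l_1,\ldots,l_d,1)\in\mathcal{C}_{d+1}$ holds if and only if $(l_1,\ldots,l_d)\in\mathcal{C}_d$; this is exactly the statement that the $d$-component isotopism $(\pi_1,\ldots,\pi_d)\in\mathfrak{I}_{n_1,\ldots,n_d}$ is feasible. Combining this with the previous paragraph, and noting that the trivial extension contributes only the feasibility of $\Theta$ itself, I obtain the key equivalence: $\Theta$ is extendable if and only if $\Theta$ is feasible and $(\pi_1,\ldots,\pi_d)$ is feasible. The necessity direction I would argue directly from a nontrivial natural extension of size $n+1$, while for the sufficiency direction I would exhibit a feasible nontrivial natural extension and invoke Proposition \ref{prop_Board}, whose proof already supplies exactly this step. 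For $d>2$ the equivalence is precisely assertion b), completing that case.

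It then remains to unpack the condition ``$(\pi_1,\pi_2)$ is feasible'' when $d=2$. Here I would compute $\mathcal{C}_2$ directly: a pair $(l_1,l_2)$ lies in $\mathcal{C}_2$ if and only if $l_1=\mathrm{lcm}(l_1,l_2)=l_2$, that is, if and only if $l_1=l_2$. Hence $(\pi_1,\pi_2)$ is feasible exactly when every cycle length occurring in $\pi_1$ equals every cycle length occurring in $\pi_2$; since this must hold simultaneously for all such lengths, each of $\pi_1$ and $\pi_2$ can use only one cycle length and these two lengths must coincide. This is assertion a), which finishes the argument.

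I do not expect a serious obstacle here; once the right observation is made the argument is essentially bookkeeping. The one point demanding care is the clean separation, for a nontrivial natural extension, of the cycle lengths inherited from $\pi$ (which merely re-encode feasibility of $\Theta$) from the newly created length-$1$ cycles (which encode feasibility of the $d$-part). I would be sure to treat the borderline case in which $\pi$ already possesses fixed points, where the length-$1$ condition is subsumed by feasibility of $\Theta$ and the second feasibility condition becomes automatic; this does not change the stated equivalence but should be acknowledged so that the ``if and only if'' is airtight.
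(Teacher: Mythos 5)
Your proposal is correct and follows essentially the same route as the paper: both arguments rest on Proposition \ref{prop_Board}'s analysis of natural extensions together with the key fact that $(l_1,\ldots,l_d,1)\in\mathcal{C}_{d+1}$ if and only if $(l_1,\ldots,l_d)\in\mathcal{C}_d$. Your explicit unpacking of $\mathcal{C}_2$ (forcing $l_1=l_2$) and of the borderline case where $\pi$ already has fixed points is simply a more detailed write-up of what the paper leaves implicit.
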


\begin{proof} The result follows straightforwarly from Proposition \ref{prop_Board} and the fact that a tuple $(l_1,\ldots,l_d,1)\in\mathcal{C}_{d+1}$ if and only if 
$(l_1,\ldots,l_d)\in\mathcal{C}_d$.
\end{proof}

\vspace{0.1cm}

Thus, for instance, the isotopism $((12),(12)(34),(12)(3))\in\mathfrak{I}_{2,4,3}$ is extendable, because it is feasible and all the cycles of the permutations of rows and columns 
have length $2$. The isotopism 
\[((123),(12)(34),(123456),(123)(45))\in\mathfrak{I}_{3,4,6,5}\] 
is also extendable, because itself and 
$((123),(12)(34),(123456))\in\mathfrak{I}_{3,4,6}$ are feasible. However, the latter is  not extendable, because the pair $(3,2)\not\in\mathcal{C}_2$.

\section{The game}\label{secgame}

 Let $\Theta$ be an extendable isotopism in $\mathfrak{I}_{n_1,\ldots, n_d,n}$ and let $a$ and $b$ be two positive integers. We introduce here the so-called \emph{$\Theta$-stabilized $(a,b)$-colouring game} that is 
 played on the Hamming graph $\mathcal{H}_{n_1,\ldots, n_d}$ with regard to $\Theta$, by two players, Alice and Bob, and a given number $n'\ge n$ of colours. As in the conventional colouring game, if all vertices of the 
 Hamming graph $\mathcal{H}_{n_1,\ldots, n_d}$ are coloured at the end of the game, then Alice wins, otherwise Bob wins. At the beginning of the game we consider as board the trivial partial Latin hyper-rectangle 
 \mbox{$P\in\mathcal{PLH}_{n_1,\ldots, n_d}$}, with all its cells being empty. Alternately the players choose an empty cell $(i_1,\ldots,i_d)\in [n_1]\times\ldots\times [n_d]$ in the board and a symbol $s\in [n']$ and 
colour 
 the former by the latter by setting $P(i_1,\ldots,i_d):=s$, where Alice makes $a$ turns (choices and colourings), whereas Bob makes $b$ turns. The colouring $s$ of the cell $(i_1,\ldots,i_d)$ has to obey the next three 
 rules (see Figure \ref{rulefigure} for illustrative examples).
\begin{quote}
\begin{enumerate}
\item[\bf(Rule 1)] The Latin array condition must hold.
\item[\bf(Rule 2)] The $\Theta$-compatibility condition must hold.
\item[\bf(Rule 3)] For each positive integer $m<\mathrm{lcm}(l_{\pi_1,i_1},\ldots,l_{\pi_d,i_d})$ and each coloured collinear cell $(i_1',\ldots,i_d')$ of $(\pi_1^m(i_1),\ldots,\pi_d^m(i_d))$, it must be
\begin{equation}\label{extendedneighbourGauto}
P(i_1',\ldots,i_d')\neq \pi^m(s).
\end{equation}
%Observe in particular that the third rule includes the fourth one if the integer $m$ there exposed could take the upper bound that is indicated.
%\item[\bf(Rule 4)] For each positive integer $m$ and each uncoloured collinear cell $(i_1',\ldots,i_d')$ of the cell $(i_1,\ldots,i_d)$ such that $(\pi_1^m(i_1'),\ldots,\pi_d^m(i_d'))$ is coloured, it must hold
%\begin{equation}\label{neighbourGauto}
%P(\pi_1^m(i_1'),\ldots,\pi_d^m(i_d'))\neq \pi^m(s).
%\end{equation}
\end{enumerate}
\end{quote}

\renewcommand{\tabcolsep}{2pt}
\begin{figure}[htbp]
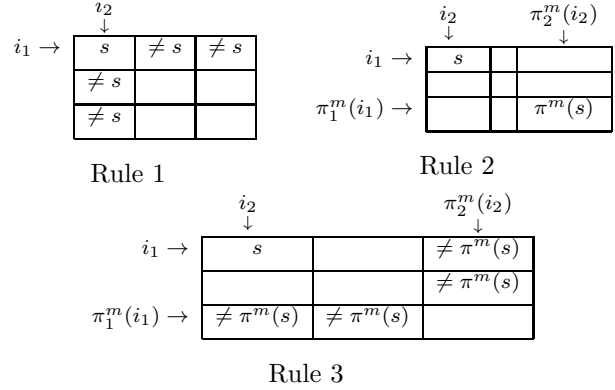

\footnotesize
\begin{center}
\begin{tabular}{ccc}
\begin{tabular}{c}
$\begin{array}{r|c|c|c|}
\multicolumn{1}{c}{}&\multicolumn{1}{c}{\underset{\downarrow}{i_2}}&
\multicolumn{1}{c}{}&
\multicolumn{1}{c}{}\\[1ex]
\cline{2-4}
i_1\rightarrow& s &\neq s&\neq s\\[1ex]
\cline{2-4}
&\neq s&&\\[1ex]
\cline{2-4}
&\neq s&&\\[1ex]
\cline{2-4}
\end{array}$\\ \\
\normalsize Rule 1
\end{tabular}
& \hspace{0.2cm} &
\begin{tabular}{c}
$\begin{array}{r|c|c|c|}
\multicolumn{1}{c}{}&\multicolumn{1}{c}{\underset{\downarrow}{i_2}\hspace{0.25cm}}&
\multicolumn{1}{c}{} &
\multicolumn{1}{c}{\underset{\downarrow}{\pi_2^m(i_2)}}\\[1ex]
\cline{2-4}
i_1\rightarrow&s&&\\ \cline{2-4}
& \,\, &&\\ \cline{2-4}
\pi_1^m(i_1)\rightarrow&&&\pi^m(s)\\[1ex]
\cline{2-4}
\end{array}$\\ \\
\normalsize Rule 2
\end{tabular}
\end{tabular}

\begin{tabular}{c}
$\begin{array}{r|c|c|c|}
\multicolumn{1}{c}{}&\multicolumn{1}{c}{\underset{\downarrow}{i_2}\hspace{0.25cm}}&
\multicolumn{1}{c}{}&
\multicolumn{1}{c}{\underset{\downarrow}{\pi_2^m(i_2)}}\\[1ex]
\cline{2-4}
i_1\rightarrow&s&&\neq \pi^m(s)\\[1ex]
\cline{2-4}
&&&\neq %\tiny
{\pi^m(s)}\\[1ex]
\cline{2-4}
\pi_1^m(i_1)\rightarrow&\neq \pi^m(s)&\neq \pi^m(s)&\\[1ex]
\cline{2-4}
\end{array}$\\ \\
\normalsize Rule 3
\end{tabular}

%&
%\begin{tabular}{c}
%$\begin{array}{r|c|c|c|}
%\multicolumn{1}{c}{}&\multicolumn{1}{c}{\underset{\downarrow}{i_2=\pi_2^m(i'_2)}}&
%\multicolumn{1}{c}{\underset{\downarrow}{i'_2}}&
%\multicolumn{1}{c}{\underset{\downarrow}{\pi_2^m(i_2)}}\\[1ex]
%\cline{2-4}
%i_1=i'_1\rightarrow&s&&\\[1ex]
%\cline{2-4}
%&&&\\[1ex]
%\cline{2-4}
%\pi_1^m(i_1)\rightarrow&\neq \pi^m(s) &&\\[1ex]
%\cline{2-4}
%\end{array}$\\ \\
%\normalsize Rule 4
%\end{tabular}
%\includegraphics[scale=0.5]{ruleillustration}
\end{center}
\caption{\label{rulefigure}Illustration of the three rules of the $\Theta$-stabilized $(a,b)$-colouring game.}
\end{figure}

 These rules enable us to ensure that any feasible turn of Alice and Bob consists of colouring an empty cell in a symbol-free cell orbit of the board by respecting Rules 1 and 3, or in a marked cell orbit by keeping in 
 mind Rule 2. The only possible colouring in this last case would then be forced by Condition~(\ref{basiceqauto}). This is the main idea on which is based the proof of the next result.

\begin{lem}\label{lem_AliceWins} Alice always wins the $\Theta$-stabilized $(a,b)$-colouring game when a player colours any cell of the last symbol-free cell orbit of the board.
\end{lem}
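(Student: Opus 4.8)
The plan is to show that, once a cell of the last symbol-free orbit has been coloured, the board is forced to complete into a single array $Q\in\mathcal{PLH}_{\Theta}$ that no player can ever be prevented from reaching; then every vertex of the Hamming graph is eventually coloured and Alice wins. First I would note that after such a move every cell orbit of the board is marked. By the remark preceding the lemma together with Condition~(\ref{basiceqauto}), each remaining empty cell lies in a marked orbit and admits a unique admissible symbol, namely the one propagated from a coloured cell of its orbit; so the players retain no freedom in the choice of symbols, and the only possible final board is the array $Q$ obtained by setting $Q(\pi_1^m(i_1),\ldots,\pi_d^m(i_d))=\pi^m(P(i_1,\ldots,i_d))$ for a coloured representative $(i_1,\ldots,i_d)$ of each orbit. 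Using Lemma~\ref{lem_card_orbit} and the $\Theta$-compatibility granted by Rule~2 (so that the relevant tuple of cycle lengths lies in $\mathcal{C}_{d+1}$ and the cell-orbit length equals the entry-orbit length, whence $Q$ is single-valued), $Q$ is a well-defined array extending the current board. It therefore remains to prove that $Q$ satisfies the Latin array condition, because then every forced move only adds an entry of the fixed array $Q\in\mathcal{PLH}_{\Theta}$ and hence respects Rules 1--3, so the game necessarily ends with all cells filled.

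The core of the argument is to rule out a repeated symbol in a line of $Q$. A repetition inside a single orbit is impossible by Proposition~\ref{prop_PLH_0}, which guarantees that the entries generated by one seed already form a partial Latin hyper-rectangle. For a repetition between two distinct orbits $Y$ and $Z$, say two collinear cells both carrying a symbol $\sigma$, I would assume without loss of generality that $Z$ was seeded after $Y$. Then $Y$ is already marked when $Z$ is seeded and has a coloured representative $\hat{c}$ with $\sigma=\pi^m(P(\hat{c}))$ for a suitable $m$. The key step is a reflection by $\Theta^{-m}$: it preserves the direction of the offending line, sends the cell of $Y$ to $\hat{c}$, and sends the cell of $Z$ to another cell $c'$ of the cell orbit of $Z$ whose forced symbol is $\pi^{-m}(\sigma)=P(\hat{c})$. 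Hence $\hat{c}$ is a coloured cell collinear with $c'$ that carries exactly the forbidden symbol, which contradicts the legality of the move seeding $Z$ --- by Rule~3 if $c'$ is not the seed of $Z$, and by Rule~1 if it is.

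The step I expect to be the main obstacle is precisely this reflection bookkeeping. I must check that $\Theta^{-m}$ maps a line of a fixed direction to a line of the same direction and preserves equality of symbols; that the lcm-compatibility forced by Rule~2 makes the reflected cell $c'$ fall into the index range $0\le m'<\mathrm{lcm}(l_{\pi_1,j_1},\ldots,l_{\pi_d,j_d})$ actually inspected by Rule~3 for the seed $(j_1,\ldots,j_d)$ of $Z$; and that the chosen representative $\hat{c}$ is genuinely coloured at the moment $Z$ is seeded, which holds because $Y$ is marked then. Granting these points, each pair of distinct orbits is settled at the seeding of whichever of the two was seeded later, the other being marked at that time. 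Consequently no line of $Q$ repeats a symbol, so $Q\in\mathcal{PLH}_{\Theta}$ is a Latin array, every forced completion is legal, and Alice wins.
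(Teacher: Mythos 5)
Your proposal is correct and follows essentially the same route as the paper's proof: the completion is forced by Rule~2 once the last symbol-free orbit is marked, and any violation of the Latin array condition in the forced array is traced back to an earlier move that would have been illegal under Rule~1 or Rule~3. You simply make explicit (via the $\Theta^{-m}$ reflection and the appeal to Proposition~\ref{prop_PLH_0} for collisions within a single orbit) the details that the paper's one-line justification of that last step leaves implicit.
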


\begin{proof} Once an empty cell of the last symbol-free cell orbit of the board is coloured, the colouring of all the empty cells of the board is uniquely determined by means of Rule 2. This mandatory colouring determines indeed a Latin hyper-rectangle. Otherwise, there would exist two collinear cells that would have to be coloured with the same colour. Nevertheless, this situation involves that at least one of the previous moves would not have been allowed by Rules 1 or 3.
\end{proof}

\vspace{0.4cm}

The next result enables us to ensure that the extendability of the isotopism~$\Theta$ is required to get a well-defined colouring game. Specifically, if the game has not finished in the sense that there exists at least 
one symbol-free cell orbit, then any empty cell can be coloured and any colour related to a given extension of $\Theta$ that has not yet been used in the development of the game can always be employed in a feasible move.

\begin{prop}\label{extensionsareuseful} Let $\Theta$ be an extendable isotopism and let $P$ be a $\Theta$-compatible partial Latin hyper-rectangle that satisfies Rule 3 and has at 
least one 
symbol-free cell orbit under the action of $\Theta$. Then, any empty cell in $P$ can be coloured. Further, if a symbol $s$ related to an extension of $\Theta$ does not appear in any cell of $P$, then there exists at least one empty cell in $P$ that can be coloured with the colour $s$ by obeying Rules 1--3.
\end{prop}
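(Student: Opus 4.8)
The plan is to exploit the orbit structure. A single move colours one empty cell $c=(i_1,\dots,i_d)$ and, through Rule~2 and (\ref{basiceqauto}), simultaneously forces the symbol $(\pi')^{m}(s)$ on every cell $\Theta^m(c):=(\pi_1^m(i_1),\dots,\pi_d^m(i_d))$ of the orbit of $c$, where $\Theta'=(\pi_1,\dots,\pi_d,\pi')$ is the relevant natural extension. Checking that a move is admissible therefore amounts to verifying Rules~1--3 for this entire propagated assignment, and I would split the argument according to whether the orbit of $c$ is already \emph{marked} or is \emph{symbol-free}.

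First I would dispose of the forced case, which settles part~(i) for cells in marked orbits. Suppose $\Theta^{m_0}(c)$ carries a symbol $t$; by the compatibility condition (C.2) the only symbol admissible at $c$ is the forced one $s:=(\pi')^{-m_0}(t)$. Rule~2 holds for free, since $\Theta^{m_0}(c)$ and $c$ share the tuple $(l_{\pi_1,i_1},\dots,l_{\pi_d,i_d},l_{\pi',s})$, which lies in $\mathcal{C}_{d+1}$ by (C.1). For Rules~1 and~3 I would argue by contradiction: if a coloured cell $y$ collinear with $\Theta^m(c)$ had $P(y)=(\pi')^{m}(s)$, then, writing $\Theta^m(c)=\Theta^{\,m-m_0}(\Theta^{m_0}(c))$ and using that every $\pi_j$ is a bijection (so $\Theta$ maps lines to lines), $y$ is collinear with $\Theta^{k}(\Theta^{m_0}(c))$ and $P(y)=(\pi')^{k}(t)$ for $k\equiv m-m_0$ reduced modulo the orbit length. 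This contradicts the hypothesis that $P$ already satisfies Rule~3 at the non-empty cell $\Theta^{m_0}(c)$. The same argument places any unused colour that is ``pending'' at an empty cell of a marked orbit.

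The second regime drives both the symbol-free case of part~(i) and the ``further'' assertion. Given a symbol-free orbit $\mathfrak{o}$ (which exists by hypothesis) and an unused colour $s$ supplied by the extension, I would colour a cell $c\in\mathfrak{o}$ with $s$. Feasibility of $\Theta'$, guaranteed by extendability via Proposition~\ref{prop_Board} and Theorem~\ref{thm_Board}, yields $(l_{\pi_1,i_1},\dots,l_{\pi_d,i_d},l_{\pi',s})\in\mathcal{C}_{d+1}$, so Rule~2 holds and $l_{\pi',s}$ divides $L:=\mathrm{lcm}(l_{\pi_1,i_1},\dots,l_{\pi_d,i_d})$, making the forced assignment on $\mathfrak{o}$ single-valued. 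The lcm-compatibility of $(l_{\pi_1,i_1},\dots,l_{\pi_d,i_d})$ also shows that no two distinct cells of $\mathfrak{o}$ are collinear: if $\Theta^m(c)$ and $\Theta^{m'}(c)$ agreed in all but one coordinate, then $m\equiv m'$ modulo each of the $d-1$ lengths in the agreeing set, hence modulo their lcm, which equals $L$ by compatibility, forcing $m=m'$. Thus there is no internal Latin clash. When $s$ is a fresh colour of a natural extension it is a fixed point of $\pi'$, so every propagated symbol equals the unused $s$, Rules~1 and~3 are vacuous, and the move is admissible; taking $c$ to be the prescribed cell handles part~(i) for symbol-free orbits, and taking any cell of $\mathfrak{o}$ handles part~(ii).

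The main obstacle is the external verification of Rules~1 and~3 in the symbol-free regime. It is immediate for the fixed-point colours that a natural extension contributes, which is exactly where extendability, i.e.\ the feasibility of every natural extension (Proposition~\ref{prop_Board}), is indispensable. It is delicate, however, for an unused colour lying in a nontrivial $\pi'$-cycle, whose propagated symbols $(\pi')^{m}(s)$ may themselves occur in $P$; since filling a single orbit produces the same configuration regardless of the starting cell, there is no freedom to dodge such a collision within one orbit. I therefore expect to phrase the ``further'' assertion for the colours genuinely furnished by the extension (the fresh fixed points), and to secure part~(i) for symbol-free cells by invoking the availability of such a spare fixed-point colour; the example of a nearly complete Latin square, whose last empty cell can be blocked for all of $[n]$, shows that these extra colours are genuinely needed, which is precisely the phenomenon the statement advertises. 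A secondary point requiring care is the modular bookkeeping of the exponents $m$, $m_0$ and $k$ in the forced case, which must be performed modulo $L$ and modulo the symbol-cycle length so that (\ref{basiceqauto}) stays consistent.
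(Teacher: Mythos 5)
Your treatment of part (i) is sound and in fact more detailed than the paper's (the forced move in a marked orbit is legal because Rule 3 for $P$ protects the propagated symbols, and a symbol-free orbit can always be filled with a fresh fixed-point colour of a natural extension, which exists by extendability). The gap is in the ``further'' assertion. You correctly identify the hard case --- an unused symbol $s$ lying in a nontrivial cycle of $\pi'$ --- but you resolve it by proposing to weaken the statement to the fresh fixed-point colours only. That is not what the proposition claims: since $\Theta$ is a (trivial) extension of itself, $s$ may be any unused symbol of $[n]$, including one in a nontrivial $\pi$-cycle, and the game analysis later in the paper needs exactly this full strength (every unused colour must be playable).

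The missing idea is that you are not forced to play $s$ in a symbol-free orbit. The paper splits the nontrivial-cycle case in two. If some symbol $s'$ in the $\pi'$-cycle of $s$ already appears in $P$, say at an entry $(i_1,\ldots,i_d,s')$ with $\pi'^{\,m}(s')=s$, then the cell $(\pi_1^m(i_1),\ldots,\pi_d^m(i_d))$ of that \emph{marked} orbit must be empty (were it coloured, Rule 2 would force its symbol to be $s$, contradicting that $s$ is unused), and colouring it with $s$ is exactly the forced move you already showed to be legal in part (i). If no symbol of the cycle of $s$ appears anywhere in $P$, then colouring a cell of a symbol-free orbit with $s$ propagates only symbols from that cycle, none of which occur in $P$, so the collision you worry about cannot arise and Rules 1 and 3 hold. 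Together with your fixed-point case this covers all unused symbols, so the proposition holds as stated; the ``nearly complete Latin square'' obstruction you mention does not occur here because such a configuration has no symbol-free cell orbit, which the hypothesis excludes.
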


\begin{proof} Let us consider an empty cell in $P$. If this is contained in a marked cell orbit under the action of $\Theta$, then Rule 2 forces its colour. Besides, Rule 3 guarantees that this forced colouring does not contradict Rule 1. Otherwise, if the cell orbit is symbol-free, then, from Proposition~\ref{extensionsareuseful}, colouring the cell with a new colour is feasible because the isotopism $\Theta$ is extendable.

Now, suppose $\Theta'=(\pi_1,\ldots,\pi_d,\pi)$ be an extension of $\Theta$ and let $s$ be a symbol that does not appear in any cell of $P$. Exactly one of the next situations holds.
\begin{enumerate}[a)]
\item $\pi(s)=s$. In such a case, it is enough to colour any cell of a symbol-free cell orbit of $P$ with the colour $s$.
\item $\pi(s)\neq s$ and there exists a marked cell orbit in $P$ containing a symbol $s'\neq s$ such that $\pi^m(s')=s$, for some $m\in\mathbb{N}$. From Rules 2--3, there exists an entry $(i_1,\ldots,i_n,s')\in E(P)$ such that the cell $(\pi^m_1(i_1),\ldots,\pi^m_d(i_d))$ is empty. It is then enough to colour the latter with the colour $s$.
\item $\pi(s)\neq s$ and there does not exist a marked cell orbit as in (b). It is then enough to colour any cell of a symbol-free cell orbit of $P$ with the colour $s$.
\end{enumerate}

Observe that, in any of the exposed cases, the colouring of the corresponding cell with the colour $s$ does not contradict Rules 1--3.
\end{proof}

\vspace{0.2cm}

Rule 3 is also required to have our game nice properties. To see it, let us call \emph{first-try-$\Theta$-stabilized $(a,b)$-colouring game} the game that results of eliminating this third rule. The next example shows the existence of configurations for which the corresponding chromatic number of this new game is not finite.

\begin{example}\label{firsttryexample} Let $\Theta=((123),(123)(456),{\rm Id})$, which is an extendable isotopism in $\mathfrak{I}_{3,6,6}$, and let $\Theta'$ be the natural extension 
of 
$\Theta$ of size $n'\ge 6$. In any feasible colouring of the Hamming graph $\mathcal{H}_{3,6}$, the cycle structure of $\Theta'$ involves the existence of six {\em circulant} cell orbits (see Figure \ref{Fig_5}, where the cells related to each orbit have respectively been filled with by the symbols $\blacktriangle$, $\blacktriangledown$, $\blacklozenge$, $\triangle$, $\triangledown$ and $\diamondsuit$).

\begin{figure}[htbp]
\begin{center}
$\begin{array}{|c|c|c|c|c|c|}
\hline
\blacktriangle&\blacktriangledown&\blacklozenge & \triangle & \triangledown & \diamondsuit\\
\hline
\blacklozenge&\blacktriangle&\blacktriangledown & \diamondsuit & \triangle & \triangledown\\
\hline
\blacktriangledown&\blacklozenge&\blacktriangle & \triangledown & \diamondsuit & \triangle\\
\hline
\end{array}$
\end{center}
\caption{Cell orbits of $\Theta=((123),(123)(456),{\rm Id})\in \mathfrak{I}_{3,6,6}$.}\label{Fig_5}
\end{figure}

Consider the first-try-$\Theta$-stabilized colouring game, with player Alice beginning, which is played on the Hamming graph $\mathcal{H}_{3,6}$ with regard to $\Theta'$. Alice 
colours 
w.l.o.g. the cell $(1,1)$ with the colour $c$. Then, Bob colours the cell $(2,4)$ with the same colour $c$. This is a feasible move according to the first-try definition. However, the cell $(2,2)$ cannot be coloured any more, since it should be coloured $c$ due to Rule 2, but it should be coloured with a colour distinct of $c$ due to Rule 1 (see Figure \ref{Fig_6}). Therefore, Bob would win for any number of colours.

\begin{figure}[htbp]
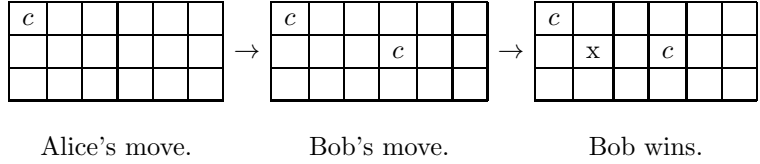

\begin{center}
\begin{tabular}{ccccc}
$\begin{array}{|c|c|c|c|c|c|}
\hline
c & \ &  \  &  \  & \  & \  \\ \hline
 & & & & &\\  \hline
 & & & & &\\  \hline
\end{array}$ & $\rightarrow$ & $\begin{array}{|c|c|c|c|c|c|}
\hline
c & \ &  \  &  \  & \  & \  \\ \hline
 & & & c & &\\  \hline
 & & & & &\\  \hline
\end{array}$& $\rightarrow$ & $\begin{array}{|c|c|c|c|c|c|}
\hline
c & \ &  \  &  \  & \  & \  \\ \hline
 & \mathrm{x} & & c & &\\  \hline
 & & & & &\\  \hline
\end{array}$\\ \ \\
Alice's move. & & Bob's move. & & Bob wins.\\
\end{tabular}
\end{center}
\caption{Winning strategy for Bob.}\label{Fig_6}
\end{figure}

We remark that Bob's destroying move is feasible for the first-try-$\Theta$-stabilized colouring game, but it is not feasible in the $\Theta$-stabilized colouring game since it 
contradicts 
Rule 3.
\end{example}

\hspace{0.1cm}

The smallest size $n'$ of the natural extension of the isotopism $\Theta$ for which Alice has a winning strategy in our original game is called {\em $\Theta$-stabilized $(a,b)$-game chromatic number} of $\mathcal{H}_{n_1,\ldots,n_d}$ and is denoted as $\gcnt{a}{b}{\mathcal{H}_{n_1,\ldots,n_d}}{}$, or ${}^{(a,b)}\chi_g^{\Theta}$ for short. In case of being $a=b=1$, it is called {\em $\Theta$-stabilized game chromatic number}. The parameter ${}^{(1,1)}\chi_g^{\Theta}$ is also denoted as $\chi_g^{\Theta}$. If, besides, $\Theta$ is the trivial isotopism, then this corresponds to the usual game chromatic number $\chi_g(\mathcal{H}_{n_1,\ldots,n_d})$.

\begin{prop}\label{prop_CS} Let $\Theta_1$ and $\Theta_2$ be two extendable isotopisms with the same cycle structure and let $a$ and $b$ be two positive integers. Then, 
\[{}^{(a,b)}\chi_g^{\Theta_1}={}^{(a,b)}\chi_g^{\Theta_2}.\]
\end{prop}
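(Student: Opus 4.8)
The plan is to exploit the conjugacy underlying Lemma~\ref{lem_CS} and lift it to an isomorphism between the two games. Since $\Theta_1=(\pi_1,\ldots,\pi_d,\pi)$ and $\Theta_2=(\sigma_1,\ldots,\sigma_d,\sigma)$ share the same cycle structure, they are conjugate in $\mathfrak{I}_{n_1,\ldots,n_d,n}$: there is $\Psi=(\psi_1,\ldots,\psi_d,\psi)$ with $\Theta_2=\Psi\Theta_1\Psi^{-1}$, i.e.\ $\sigma_j=\psi_j\pi_j\psi_j^{-1}$ for each $j\le d$ and $\sigma=\psi\pi\psi^{-1}$. First I would observe that this conjugacy is inherited by the natural extensions: for every size $n'\ge n$, letting $\psi'$ be the natural extension of $\psi$ (identity on the new symbols) and $\Psi'=(\psi_1,\ldots,\psi_d,\psi')$, one checks that $\Psi'$ conjugates the natural extension $\Theta_1'$ onto the natural extension $\Theta_2'$, because $\psi'\pi'\psi'^{-1}$ agrees with $\sigma$ on $[n]$ and fixes every new symbol. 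In particular $\Theta_1'$ and $\Theta_2'$ have equal cycle structure (consistent with both isotopisms being extendable).

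Next I would set $\Phi(P):=P^{\Psi'}$, which by definition of an isotopic image is a bijection of $\mathcal{PLH}_{n_1,\ldots,n_d,n'}$ sending the cell $(i_1,\ldots,i_d)$ carrying symbol $s$ to the cell $(\psi_1(i_1),\ldots,\psi_d(i_d))$ carrying symbol $\psi'(s)$. The heart of the argument is the elementary identities $l_{\sigma_j,\psi_j(i_j)}=l_{\pi_j,i_j}$ and $\sigma_j^m(\psi_j(i_j))=\psi_j(\pi_j^m(i_j))$, together with their symbol analogues $l_{\sigma',\psi'(s)}=l_{\pi',s}$ and $\sigma'^{\,m}(\psi'(s))=\psi'(\pi'^{\,m}(s))$, all of which follow from $\sigma_j^m=\psi_j\pi_j^m\psi_j^{-1}$ and the fact that conjugation carries the cycle through $i_j$ bijectively onto the cycle through $\psi_j(i_j)$. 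These identities show that $\Phi$ maps the cell orbit of $(i_1,\ldots,i_d)$ under $\Theta_1'$ onto the cell orbit of $(\psi_1(i_1),\ldots,\psi_d(i_d))$ under $\Theta_2'$, preserving completeness and symbol-freeness, and that it sends collinear cells to collinear cells.

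Using these identities I would verify that $\Phi$ is an isomorphism of the two games: it sends the trivial board to the trivial board, and a single-cell colouring is legal on a board $P$ in the $\Theta_1'$-game if and only if its $\Phi$-image is a legal colouring on $\Phi(P)$ in the $\Theta_2'$-game. Rule~1 is immediate because $\Phi$ only permutes lines and symbols; Rule~2 transfers because the lcm-compatibility test in (C.1) depends only on the cycle lengths $l_{\pi_j,i_j},l_{\pi',s}$, which are preserved, while (\ref{basiceqauto}) is carried to the corresponding instance of (\ref{basiceqauto}) for $\Theta_2'$ by the displayed identities; Rule~3 transfers likewise, since condition (\ref{extendedneighbourGauto}) is tested exactly over the collinear cells of the orbit that $\Phi$ preserves. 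Consequently a run of the $(a,b)$-game for $\Theta_1'$ with $n'$ colours corresponds move-for-move, through $\Phi$ and $\Phi^{-1}$ (batching $a$ or $b$ single-cell colourings per turn), to a run for $\Theta_2'$ with $n'$ colours, full boards corresponding to full boards.

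Finally I would conjugate strategies: if Alice has a winning strategy for the $\Theta_1'$-game she plays the $\Phi$-image of each of her moves while reading Bob's moves back through $\Phi^{-1}$, and since $\Phi$ sends completed Latin hyper-rectangles to completed ones she wins the $\Theta_2'$-game; the reverse direction is symmetric via $\Psi'^{-1}$. Thus, for every $n'\ge n$, Alice wins the $\Theta_1'$-game with $n'$ colours if and only if she wins the $\Theta_2'$-game with $n'$ colours, so the least admissible $n'$ coincide and ${}^{(a,b)}\chi_g^{\Theta_1}={}^{(a,b)}\chi_g^{\Theta_2}$. The main obstacle is the rule-by-rule verification that $\Phi$ preserves legality --- in particular checking Rules~2 and~3 against the precise definitions of $\Theta$-compatibility and of collinearity --- but once the two conjugation identities above are in hand this reduces to routine substitution.
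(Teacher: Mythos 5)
Your proposal is correct and follows essentially the same route as the paper: conjugate $\Theta_1$ to $\Theta_2$ by some $\Psi$ and transport positions (and hence Alice's strategy) via $P\mapsto P^{\Psi}$. The paper states this in three sentences and calls the rest "straightforward"; you supply the details it omits, in particular the check that the conjugacy passes to the natural extensions of arbitrary size $n'$ and the rule-by-rule verification that legality of moves is preserved, both of which are exactly the points that make the transfer of strategies work.
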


\begin{proof} The result is based on Lemma \ref{lem_CS}. Particularly, since $\Theta_1$ and $\Theta_2$ have the same cycle structure, there exists an isotopism $\Theta$ such that $\Theta_2=\Theta\Theta_1\Theta^{-1}$. The result follows straightforwardly from the fact that the winning strategy of Alice for the $\Theta_2$-stabilized $(a,b)$-colouring game is exactly the same of that for the $\Theta_1$-stabilized $(a,b)$-colouring game. Specifically, every partial Latin hyper-rectangle $P\in\mathcal{PLH}_{\Theta_1}$ that corresponds to a position of the winning strategy of Alice for the latter is uniquely related to the partial Latin hyper-rectangle $P^{\Theta}\in\mathcal{PLH}_{\Theta_2}$ that corresponds to the analogous position of the winning strategy of Alice for the former.
\end{proof}

\vspace{0.1cm}

\begin{lem} \label{lem_Bounds} Let $a$ and $b$ be two positive integers and let $\Theta$ be an extendable isotopism in $\mathfrak{I}_{n_1,\ldots,n_d,n}$. Then,
\begin{enumerate}[a)]
\item ${}^{(a,b)}\chi_g^{\Theta}={}^{(|\mathfrak{o}(\Theta)|,b)}\chi_g^{\Theta}$, for all $a\geq |\mathfrak{o}(\Theta)|$.
\item ${}^{(a,b)}\chi_g^{\Theta}={}^{(a,|\mathfrak{o}(\Theta)|)}\chi_g^{\Theta}$, for all $b\geq |\mathfrak{o}(\Theta)|$.
\item If $n=\max_{j=1,\ldots,d} n_j$, then
\[n\le {}^{(a,b)}\chi_g^{\Theta}\le |\mathfrak{o}(\Theta)|+n-1.\]
\end{enumerate}
\end{lem}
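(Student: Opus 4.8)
The common engine for all three parts is Lemma~\ref{lem_AliceWins}: the game is decided the instant the last symbol-free cell orbit is first filled, so the only moves that matter are the at most $|\mathfrak{o}(\Theta)|$ ``first touches'' of the orbits, every other move being a forced fill-in of an already marked orbit (legal by Rule~2 together with Proposition~\ref{extensionsareuseful}). For a) and b) the plan is to turn this into a saturation statement. If a player's per-turn allowance is at least $|\mathfrak{o}(\Theta)|$, then in a single turn that player can already first-touch every still symbol-free orbit, and any surplus colourings can be spent on forced fill-ins; hence no allowance beyond $|\mathfrak{o}(\Theta)|$ enlarges the set of orbit-configurations reachable in one turn. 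I would make this precise by strategy-copying: a winning strategy in the game with the smaller allowance is replayed move-for-move in the game with the larger one, the extra colourings being realised as forced fill-ins, and conversely the strategically relevant moves of the larger game are compressed into at most $|\mathfrak{o}(\Theta)|$ per turn. This gives ${}^{(a,b)}\chi_g^{\Theta}={}^{(|\mathfrak{o}(\Theta)|,b)}\chi_g^{\Theta}$ for $a\ge|\mathfrak{o}(\Theta)|$ and, symmetrically, ${}^{(a,b)}\chi_g^{\Theta}={}^{(a,|\mathfrak{o}(\Theta)|)}\chi_g^{\Theta}$ for $b\ge|\mathfrak{o}(\Theta)|$; for Bob the surplus is harmless precisely because he will never first-touch the \emph{last} symbol-free orbit, which by Lemma~\ref{lem_AliceWins} would concede the game to Alice.

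For the lower bound in c) I would argue that fewer than $n$ colours make a completed board impossible, so Alice cannot win under any strategy. The Hamming graph $\mathcal{H}_{n_1,\ldots,n_d}$ has a clique on the $\max_j n_j=n$ cells of a longest line, so any Latin hyper-rectangle filling the board needs at least $n$ symbols; if $n'<n$ the game can never be completed, whence ${}^{(a,b)}\chi_g^{\Theta}\ge n$.

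For the upper bound I would show that $n':=|\mathfrak{o}(\Theta)|+n-1$ colours make every position unstuck, so the board is always completable and Alice wins by Lemma~\ref{lem_AliceWins}. The key structural fact is that an extendable isotopism has no cell orbit with two collinear cells: by Theorem~\ref{thm_Board} the cell-cycle lengths $l_{\pi_1,i_1},\ldots,l_{\pi_d,i_d}$ of any orbit form an lcm-compatible tuple (for $d=2$ they are even all equal), so each divides the least common multiple of the others, and a direct computation with the description in Lemma~\ref{lem_card_orbit} then forbids two orbit cells in a common line. Consequently any symbol-free orbit can be filled monochromatically with a single fixed colour. In the natural extension the $n'-n=|\mathfrak{o}(\Theta)|-1$ symbols in $\{n+1,\ldots,n'\}$ are all fixed, and filling an orbit with an unused such symbol is always legal (it is fresh, so Rules~1 and 3 hold, and there are no internal collinear conflicts). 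Since at most $|\mathfrak{o}(\Theta)|-1$ orbits can be marked or complete while a symbol-free orbit remains, and each occupies at most one fixed fresh colour, either an unused fixed fresh colour is still available, or all $|\mathfrak{o}(\Theta)|-1$ of them have been used monochromatically, in which case the entire alphabet $[n]$ is free and the remaining orbit can be filled with a compatible original colour guaranteed by the feasibility of $\Theta$ via Proposition~\ref{prop_PLH_0}. Either way a legal move exists, so the game never stalls.

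The main obstacle I expect is the colour-availability count in the upper bound: one must verify that the ``$|\mathfrak{o}(\Theta)|-1$ marked-or-complete orbits each cost at most one fixed fresh colour'' accounting is genuinely adversary-proof, i.e.\ that Bob cannot, by spending colours wastefully across several lines, both exhaust the fixed fresh colours and block every compatible original colouring of the last orbit; the dichotomy above is exactly what closes this gap, and it is where the value $|\mathfrak{o}(\Theta)|+n-1$ gets pinned down. Parts a) and b), by contrast, are routine once the ``surplus moves are forced fill-ins'' reduction is in place, their only delicacy being the bookkeeping that keeps the two simulated games in the same orbit-configuration.
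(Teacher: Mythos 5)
Your proposal is correct and follows essentially the same route as the paper: parts a) and b) via the observation that any allowance beyond $|\mathfrak{o}(\Theta)|$ per turn can only be spent on forced fill-ins of already marked orbits, and part c) via the clique of a longest line for the lower bound and ``one fresh fixed colour per additional orbit'' for the upper bound. The only difference is one of detail: the paper disposes of the upper bound in a single sentence, whereas you make explicit the two facts that actually justify it (extendability forbids two collinear cells inside one orbit, so a symbol-free orbit can always absorb a fresh fixed colour; and if Bob exhausts all $|\mathfrak{o}(\Theta)|-1$ fresh colours then every marked orbit is monochromatic in a fresh colour, so all of $[n]$ is still available for the last orbit), which is a worthwhile sharpening but not a different proof.
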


\begin{proof} Under the assumptions of (a) and (b), since the number of moves of both players is at least the number of cell orbits under the action of $\Theta$, they can impose the colouring of all the cells by colouring just one cell of each orbit. Hence, the winning strategy of Alice, if one exists at all, is the same for any such a number of moves in both cases.

The lower bound in (c) holds straightforwardly. Now, since extendability involves feasibility, the game can start because any cell of the empty partial Latin hyper-rectangle in 
$\mathcal{PLH}_{n_1,\ldots,n_d,n}$ can be coloured with any of the symbols of the set $[n]$. Besides, Rule 2 enables us to ensure that at least the first cell orbit that is chosen in the first move can also be coloured with the symbols in~$[n]$. Since the colouring of any other orbit cell is uniquely determined by that of any of its cells, the upper bound results from the fact that we can ensure the complete colouring of $P$ by considering an extension of the isotopism $\Theta$ with at most $|\mathfrak{o}(\Theta)|-1$ new distinct colours.
\end{proof}

\vspace{0.1cm}

 The lower bound in item (c) of Lemma \ref{lem_Bounds} is tight, for instance, for the isotopism $\Theta_1=((12),(12),\mathrm{Id})\in\mathfrak{I}_{2,2,2}$, whereas the upper bound is tight for the isotopism 
 $\Theta_2=((12),(12),(12))\in\mathfrak{I}_{2,2,2}$. In both games, the first player can start w.l.o.g. by colouring the cell $(1,1)$ of the empty partial Latin square of order $2$ with the symbol $1$. In the first case, 
 the cell $(2,2)$ must also be coloured with the symbol~$1$, whereas the cells $(1,2)$ and $(2,1)$ must be coloured with the symbol~$2$. In the second case, the cell $(2,2)$ must be coloured with the symbol~$2$ and the 
 Latin array condition involves the cells $(1,2)$ and $(2,1)$ to be coloured with a third colour $3$ related to the natural extension 
\[\Theta'_2=((12),(12),(12)(3))\in\mathfrak{I}_{2,2,3}.\]

\vspace{0.3cm}

 We can consider two variants, $g_A$ and $g_B$, of the proposed game $g$ depending, respectively, on whether Alice or Bob does the first move. To make clear which variant we refer, we denote the corresponding chromatic 
 numbers with the subindices $g_A$ or $g_B$ instead of $g$. The specific case of the variant $g_B$ for which $\Theta$ is the trivial isotopism, $d=2$, and $n_1=n_2=n$ corresponds to 
what Schlund~\cite{Schlund2011} called 
 {\em $(n,a,b)$-game}. He proved in particular the next result.

\begin{prop}[Schlund \cite{Schlund2011}]\label{Schlund} Let $n$ be a positive integer. Then,
\begin{enumerate}[a)]
\item Bob wins the $(n,1,1)$-game, for all $n\geq 3$.
\item Alice wins the $(n,n-1,1)$-game.
\item If Alice wins the $(n,a,1)$-game, then she also wins the $(n,2a+1,1)$-game.
\item Alice wins the $(n,2^kn-1,1)$-game for all positive integers $k$.
\item $n+1\leq \chi_{g_B}(\mathcal{H}_{n,n})$, for all $n\geq 3$.
\end{enumerate}
\end{prop}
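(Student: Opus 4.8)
The five statements are not independent, so the plan is to prove the two substantive ones, (a) and (b), and to derive (c), (d) and (e) as formal consequences. First, (c) is exactly the instance $m=2$, $b=1$ of Lemma~\ref{lem0}: a win for Alice in the $(n,a,1)$-game (i.e. with $n$ colours on $\mathcal{H}_{n,n}$, Bob beginning) means $\gcnb{a}{1}{\mathcal{H}_{n,n}}\le n$, and Lemma~\ref{lem0} gives $\gcnb{2a+1}{1}{\mathcal{H}_{n,n}}\le\gcnb{a}{1}{\mathcal{H}_{n,n}}\le n$, which is precisely a win in the $(n,2a+1,1)$-game. Then (d) follows by induction on $k$: the base case $(n,2n-1,1)$ is obtained by applying (c) to the strategy of (b) for $(n,n-1,1)$, and if Alice wins $(n,2^kn-1,1)$ then (c) with $a=2^kn-1$ yields a win in $(n,2(2^kn-1)+1,1)=(n,2^{k+1}n-1,1)$. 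Finally, (e) is a reformulation of (a): since Bob wins the $(n,1,1)$-game with $n$ colours, Alice has no winning strategy with $n$ colours, so $\chi_{g_B}(\mathcal{H}_{n,n})\ge n+1$.

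For (b) I would give Alice a \emph{row-completion} strategy and maintain the invariant that, after each of her turns, the coloured cells form a union of completely filled rows, i.e.\ a $k\times n$ Latin rectangle. Under this invariant, at the start of every Bob turn each row is either full or empty, so Bob is forced to place a single new entry in a hitherto empty row $i$; that row then has exactly $n-1$ empty cells, which is precisely the number Alice may colour. She completes row $i$, restoring the invariant, and after $n$ rounds the whole square is filled. The only nontrivial point is that she can always complete row $i$ \emph{through} Bob's entry: that a $k\times n$ Latin rectangle together with one compatible new entry $(i,j,s)$ extends to a $(k+1)\times n$ Latin rectangle. I would prove this via the bipartite graph on columns versus symbols whose edges join a column to the symbols still missing from it; this graph is $(n-k)$-regular, hence decomposes into $n-k$ perfect matchings, and the edge $\{j,s\}$ lies in one of them, giving the required completion of row $i$ using only the $n$ colours (the case $k=n-1$ is consistent, since Rule~1 already forces $s$ to be the unique symbol missing from column $j$). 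Thus Alice wins with exactly $n$ colours.

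The crux is (a), for which I would give Bob a \emph{fork} (double-threat) strategy. Call a still-empty cell $(i,j)$ \emph{critical} if the colours present in row $i$ and column $j$ together miss exactly one colour $c$, while $c$ can still be placed somewhere in row $i$ or column $j$; one further move of Bob then makes $(i,j)$ uncolourable, and the only way for Alice to rescue $(i,j)$ is to colour it herself. The plan is for Bob, starting from a central first placement and reacting to Alice's replies, to steer the game into a small (essentially $3\times3$) configuration carrying two critical cells $(i_1,j_1)\neq(i_2,j_2)$ whose finishing moves occupy distinct cells and whose lines are disjoint enough that colouring one critical cell does not lift the threat on the other. Since Alice can repair at most one critical cell per move, on his following turn Bob kills the remaining one, creating an uncolourable cell and winning. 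For $n>3$ the surplus colours are used only to pad the relevant lines up to the critical threshold, without granting Alice extra defensive tempo.

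The main obstacle is exactly the robustness of this fork. I expect the real work to be a finite case analysis showing that, for every way Alice can answer Bob's opening moves, Bob can still reach a position holding two independent critical threats, and that the construction is uniform in $n\ge3$. By comparison the reductions for (c)--(e) and the extension lemma for (b) are routine; it is the establishment and defence of the double threat in (a), against all of Alice's options, where the difficulty lies.
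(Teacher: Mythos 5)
First, a point of reference: the paper does not prove Proposition~\ref{Schlund} at all --- it is imported verbatim from Schlund's thesis \cite{Schlund2011} --- so there is no in-paper argument to measure your proposal against, and I can only assess it on its own terms. Your reductions are correct: (c) is indeed the instance $m=2$, $b=1$ of Lemma~\ref{lem0} (and since fewer than $n$ colours cannot properly colour $\mathcal{H}_{n,n}$ at all, the bound $\gcnb{2a+1}{1}{\mathcal{H}_{n,n}}\le n$ is genuinely equivalent to a win with exactly $n$ colours); (d) follows from (b) and (c) by your induction; and (e) is equivalent to (a) for the same reason. Your proof of (b) is complete: the invariant forces Bob to open a fresh row with a single compatible entry $(i,j,s)$, the $(n-k)$-regular bipartite column--symbol graph decomposes into $n-k$ perfect matchings by K\"onig's edge-colouring theorem, and one of these contains the edge $\{j,s\}$ and completes row $i$.

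The gap is (a), and you have in effect flagged it yourself. What you give there is a plan, not a proof: you never specify Bob's opening, never exhibit the two critical cells, and never carry out the case analysis over Alice's replies that you correctly identify as the real work. The assertion that Bob ``can steer the game into a small configuration carrying two critical cells whose lines are disjoint enough'' is exactly the statement requiring proof; nothing in the proposal excludes that Alice's answers (occupying the cell Bob is attacking, or reusing Bob's colour so that the missing colour can no longer be legally placed on the threatened lines) always disarm one threat before the second is set. Moreover the uniformity in $n$ is not a matter of ``padding'': to make a cell uncolourable Bob must assemble $n$ distinct colours among its $2(n-1)$ neighbours, so for larger $n$ each threat takes more of Bob's moves to arm, and Alice correspondingly gains more moves to interfere; a separate argument (or a reduction of the $n$-colour game to the $3$-colour one) is needed. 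Since (e) is derived from (a), both of these parts remain unproved as written. To close the gap you would need an explicit opening for Bob, a finite response table against Alice's replies up to symmetry for $n=3$, and a genuine induction or embedding argument for $n>3$.
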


\vspace{0.2cm}

The next result enables us to ensure that the lower bound exposed in the last assertion in Proposition \ref{Schlund} is tight for $n=3$.

\begin{prop} \label{prop_Hnn} $\chib(\mathcal{H}_{3,3})=4$.
\end{prop}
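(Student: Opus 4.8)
The plan is to obtain the value $4$ as the common value of a lower and an upper bound. The game in question is exactly the $\Theta$-stabilized $(1,1)$-colouring game with $\Theta$ trivial, $d=2$ and $n_1=n_2=n=3$, i.e.\ the ordinary colouring game played on the rook's graph $\mathcal{H}_{3,3}$ with Bob moving first. Hence the lower bound is free: Proposition~\ref{Schlund}(e) with $n=3$ gives $\chib(\mathcal{H}_{3,3})\ge n+1=4$. It therefore only remains to prove the upper bound $\chib(\mathcal{H}_{3,3})\le 4$, that is, to exhibit a winning strategy for Alice when the game is played with $4$ colours.

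First I would reformulate the winning condition combinatorially. Since $\mathcal{H}_{3,3}$ is $4$-regular and exactly $4$ colours are available, an uncoloured cell can never be filled afterwards precisely when its four neighbours---the two remaining cells of its row and the two remaining cells of its column---already carry four pairwise distinct colours; call such a neighbourhood \emph{rainbow}. By the Latin array condition the two cells of a row (resp.\ column) always receive distinct colours, so a neighbourhood is rainbow if and only if the colour set of the row-pair is disjoint from the colour set of the column-pair. A rainbow neighbourhood, once created around an empty cell, persists forever, so Alice wins if and only if she can keep the board free of such blocked cells until all nine are filled; equivalently, she must guarantee, after each of her moves, that Bob cannot complete a rainbow neighbourhood of some empty cell on his next move.

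Next I would describe Alice's strategy. Using the automorphism group $(S_3\times S_3)\rtimes S_2$ of the rook's graph together with the freedom to relabel colours, I may assume Bob's opening move is $(1,1)=1$. Alice answers by colouring the centre, $(2,2)=2$; since a coloured cell can no longer be blocked, this permanently removes the centre as a possible victim and, crucially, plants the colour $2$ in the neighbourhood of every edge cell, because the centre is adjacent exactly to the four edges $(1,2),(2,1),(2,3),(3,2)$. I would then fix Alice's reactive rule: whenever Bob's move produces an empty cell $v$ with three coloured neighbours of three distinct colours (a rainbow-in-one-move threat), Alice colours the remaining neighbour of $v$ herself with a colour that does not complete the rainbow---necessarily a colour already present in the perpendicular pair of $v$---thereby forcing a colour repetition in the neighbourhood of $v$ and making it permanently safe. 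The residual verification is a bounded case analysis: after the first two moves the only symmetries left are the transposition $(i,j)\mapsto(j,i)$ and the swap of the two still-unused colours $3,4$, a group of order $4$ that collapses Bob's move-$3$ possibilities to a short list, and from each the continuations are few, so one checks that Alice's rule maintains the ``no imminent rainbow'' invariant through the ninth move, which then fills the board.

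The hard part will be the bookkeeping of this case analysis rather than any single deep step, together with confirming that Alice's chosen repetition colour is always \emph{legal} at the cell she wants to fill and that she can always respond to the binding threat even when Bob's move spawns several at once. After the first two moves every potential victim except the far corner $(3,3)$ already has one of the played colours ($1$ or $2$) forced into its neighbourhood, so the genuinely delicate object is $(3,3)$, the unique empty cell none of whose four neighbours is yet constrained; I expect the tightest lines to be those in which Bob pours the colours $\{1,3,4\}$ onto the row and column through $(3,3)$ to rainbow-surround it, and the crux is to show that Alice, having filled the centre, always reaches one of its four neighbours in time to impose the needed repetition. Verifying this last point---and that the symmetry reductions really keep Bob's inequivalent continuations to a manageable number---is where the argument must be carried out in full.
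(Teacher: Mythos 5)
Your overall architecture (lower bound from Proposition~\ref{Schlund}(e), upper bound from an explicit strategy for Alice with $4$ colours, and the reformulation ``blocked $=$ rainbow neighbourhood'') matches the paper, but your strategy itself is wrong: Alice's reply $(2,2)=2$ to Bob's $(1,1)=1$ is a \emph{losing} move. The paper states this explicitly (``Alice must colour a cell distinct of $(2,2)$ and $(3,3)$''), and it is easy to verify: Bob answers $(3,3)=2$. Now each of the four cells $(1,2)$, $(2,1)$, $(1,3)$, $(3,1)$ already sees both colours $1$ and $2$ among its neighbours, so each is blocked as soon as its remaining two uncoloured neighbours receive $\{3,4\}$; moreover each of these four cells can itself only legally receive $3$ or $4$. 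A check of Alice's $14$ possible replies shows she always loses: colouring any of those four cells (necessarily with $3$ or $4$) hands Bob an immediate one-move block (e.g.\ after $(1,2)=3$ Bob plays $(2,3)=4$ and $(1,3)$ sees $1,3,4,2$); colouring $(2,3)$ or $(3,2)$ with $3$ or $4$ creates two one-move threats at once; and colouring $(2,3)$ or $(3,2)$ with $1$ lets Bob play the other of these two cells with colour $3$, producing two one-move threats (against $(1,2)$ via $(1,3)=4$ and against $(3,1)$ via $(2,1)=4$) whose completing cells share no line, so Alice cannot parry both. The paper's Alice instead answers on a \emph{neighbour} of Bob's first cell, $(1,2)=2$, and the subsequent case analysis is organised around preventing a third distinct colour from appearing and around forcing the third use of colour $1$.

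A secondary conceptual error feeds this mistake: you treat ``planting colour $2$ in the neighbourhood of every edge cell'' as protective. It is the opposite. A cell is safe only once two of its neighbours share a colour; a single new colour in a neighbourhood that already contains a different colour moves that cell \emph{closer} to being rainbow-blocked. This is exactly why $(2,2)=2$ followed by Bob's $(3,3)=2$ is fatal, and why you misidentify $(3,3)$ as the ``genuinely delicate'' cell when in fact $(3,3)$ and the two centre-adjacent corners never become dangerous in that line --- the four cells seeing both $1$ and $2$ do. The lower-bound half of your argument is fine and identical to the paper's, but the upper bound needs a different second move for Alice and a correspondingly different case analysis.
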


\begin{proof} Suppose that Bob starts a $(3,1,1)$-game with  four distinct colours. He wins the game if and only if there exists a configuration during the game with an empty cell having all its four neighbours with 
distinct colours. Due to it, Bob must always avoid the colouring  of three cells with the same colour. Let us expose here a possible winning strategy for Alice. W.l.o.g. we can suppose that Bob starts the game by 
colouring the cell $(1,1)$ of the empty partial Latin square of  order $3$ with colour 1. Then, Alice must colour a cell distinct of $(2,2)$ and $(3,3)$. Otherwise, a case study enables us to ensure that Bob has a 
winning strategy. We can suppose, therefore, that Alice colours  the cell $(1,2)$ with colour~2. Now, we can suppose that Bob uses a colour $c\in\{3,4\}$. Otherwise, Alice could colour a third cell with the same colour 
$1$ or $2$ and would win the game. From here on, we suppose that  $c=3$. Whatever Bob's second move is, Alice can colour a cell in the third column with colour~1. The only configurations that Alice must avoid under such 
conditions are, up to permutation of the second and third rows,
$$\begin{array}{|c|c|c|} \hline
1 & 2 & \ \\ \hline
3 & \ & 1 \\ \hline
\ & \ & \ \\ \hline
\end{array} \hspace{1cm}\text{ and } \hspace{1cm}
\begin{array}{|c|c|c|} \hline
1 & 2 & \ \\ \hline
\ & 3 & \ \\ \hline
\ & \ & 1 \\ \hline
\end{array}$$
In both cases, Bob would win the game by colouring, respectively, the cell $(3,2)$ or $(3,1)$ with colour~4. Once these two configurations are avoided, the second move of Alice forces the third one of Bob, who must colour with a colour $c'\not\in\{1,2\}$ the unique cell in the second column that would make possible the third use of the colour~1. Up to isotopism, the possible configurations of the game at this moment are
$$\begin{array}{|c|c|c|} \hline
1 & 2 & \ \\ \hline
3 & 4 & \ \\ \hline
\ & \ & 1 \\ \hline
\end{array} \hspace{0.5cm}
\begin{array}{|c|c|c|} \hline
1 & 2 & \ \\ \hline
\ & 3 & 1 \\ \hline
\ & 4 & \ \\ \hline
\end{array}\hspace{0.5cm}
\begin{array}{|c|c|c|} \hline
1 & 2 & 3 \\ \hline
\ & \ & 1 \\ \hline
\ & 3 & \ \\ \hline
\end{array}\hspace{0.5cm}
\begin{array}{|c|c|c|} \hline
1 & 2 & 3 \\ \hline
\ & \ & 1 \\ \hline
\ & 4 & \ \\ \hline
\end{array}
\hspace{0.25cm} \text{ or } \hspace{0.25cm}
\begin{array}{|c|c|c|} \hline
1 & 2 & \ \\ \hline
\ & 4 & 3 \\ \hline
\ & \ & 1 \\ \hline
\end{array}$$
 A simple case study involves Alice to win the game based on the first configuration and to guarantee her victory in the remaining ones by colouring, respectively, the cells $(3,1)$, $(2,1)$, $(3,1)$ and $(3,1)$  with 
the colours $3$, $3$, $3$ and~$4$.
\end{proof}

Schlund~\cite{Schlund2011} also indicated (loc. cit. page 57) that $n+1\leq \chi_{g_A}(\mathcal{H}_{n,n})$, for all $n\geq 3$. Nevertheless, the next result involves this lower bound 
to be wrong 
for $n=3$.

\begin{prop} \label{prop_Hnn_2} $\chia(\mathcal{H}_{3,3})=3$.
\end{prop}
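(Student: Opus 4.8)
The plan is to establish $\chia(\mathcal{H}_{3,3})\ge 3$ and $\chia(\mathcal{H}_{3,3})\le 3$ separately. Throughout I work in the Latin-square picture of Section~\ref{secprelim}: since here $\Theta$ is trivial, a move just fills one empty cell of the $3\times 3$ array with a symbol obeying the Latin array condition (Rule~1), and Alice wins exactly when the array is completed to a Latin square of order $3$. For the lower bound, a finished play is a genuine Latin square, which uses $\chi(\mathcal{H}_{3,3})=3$ symbols; with only two symbols the board can never be completed, so Alice cannot win and $\chia(\mathcal{H}_{3,3})\ge 3$. (This is the instance $\chia(G)\ge\chi(G)$, valid for every graph $G$.)

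For the upper bound I would produce an explicit winning strategy for Alice with three symbols. By the vertex-transitivity of $\mathcal{H}_{3,3}$ and the interchangeability of the three symbols, I may let Alice open with $(2,2):=1$. The guiding idea is that Alice tries to reach, within one or two of her moves, a \emph{fully determined} position: a partial Latin square in which every empty cell admits a unique legal symbol and these forced symbols are pairwise compatible. From such a position every later move of either player is forced, the play runs to its end, and the result is a Latin square; hence Alice wins.

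It remains to reach a fully determined position against every reply of Bob, and here I would use the subgroup of board symmetries fixing $(2,2)=1$ --- the dihedral symmetries of the square together with the transposition of the symbols $2$ and $3$ --- to cut Bob's answer down to three cases. If Bob plays a corner with a new symbol, say $(1,1)=2$, then Alice answers $(3,3)=3$ and the board is already fully determined. If Bob repeats Alice's symbol in a corner, say $(1,1)=1$, then Alice completes the diagonal with $(3,3)=1$; every answer of Bob is now equivalent to $(1,2)=2$, and Alice replies $(2,1)=3$, which determines the board. If Bob plays an edge, say $(2,1)=2$, then Alice completes the row with $(2,3)=3$; Bob's answer reduces to one of $(1,1)=1$, $(1,1)=3$, $(1,2)=2$, and Alice replies respectively by $(3,2)=2$, $(3,3)=2$, $(3,1)=1$, in each case reaching a fully determined board. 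Checking that each listed reply really forces every remaining cell consistently is a short finite verification.

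The point I expect to be the crux is that Alice must do more than keep the position completable: she must forestall every \emph{poisoning} move, i.e.\ every legal move of Bob that would create a dead cell --- two collinear empty cells forced to carry the same symbol. This is exactly why the naive responses fail. For example, after $(2,2)=1$, $(1,1)=2$ the two cells $(1,2)$ and $(2,1)$ are already forced to $3$; if Alice merely fills one forced cell, say $(1,2)=3$, then Bob plays $(3,1)=3$ and kills $(2,1)$, and symmetrically $(2,1)=3$ loses to $(1,3)=3$. Alice's winning move $(3,3)=3$ succeeds precisely because its cell lies in row $3$ and column $3$, which together contain all four squares $(1,3),(2,3),(3,1),(3,2)$ from which Bob could have injected a fatal $3$; placing the $3$ there simultaneously disarms both threats. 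Selecting, in every branch, the response whose symbol neutralises all of Bob's threats at once is the delicate part of the argument.
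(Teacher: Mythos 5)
Your proposal is correct and is essentially the paper's proof: the same three-way case split on Bob's reply (an adjacent cell, a non-adjacent cell repeating Alice's symbol, a non-adjacent cell with a new symbol) with the same responses, merely translated to an opening at the centre cell, which the vertex-transitivity of $\mathcal{H}_{3,3}$ makes equivalent to the paper's corner opening; your added lower bound and the discussion of why the naive replies are ``poisoned'' are correct, and I have checked that each of your terminal positions is indeed fully determined and consistent. One small caution: in your second case the reduction of Bob's reply to $(1,2)=2$ requires the simultaneous row-and-column permutations preserving the all-$1$ diagonal, not just the dihedral-plus-symbol-swap group you name (under which $(1,3)$ and $(3,1)$ form a separate orbit), but the symmetric responses handle those cells as well, and the paper's own ``by symmetry'' step at the identical position is no more explicit.
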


\begin{proof} A winning strategy for Alice with 3 colours is the following. W.l.o.g. Alice colours the cell $(1,1)$ of the empty partial Latin square of order $3$ with colour 1. Due to symmetry there are only three relevant cases to consider.
\begin{itemize}
\item {\bf Case 1}. Bob colours the cell $(1,2)$ with colour 2. Then, Alice responds by colouring the cell $(1,3)$ with colour 3. By symmetry, w.l.o.g. Bob colours the cell $(2,1)$ with colour 2. Then, Alice may fix the colouring and wins by colouring the cell $(3,2)$ with colour~1.
\item {\bf Case 2}. Bob colours the cell $(2,2)$ with colour 1. Then, Alice colours the cell $(3,3)$ with colour~1. By symmetry, w.l.o.g. Bob colours $(1,2)$ with colour~2. Then, Alice fixes the colouring by colouring $(2,1)$ with colour~3.
\item {\bf Case 3}. Bob colours the cell $(2,2)$ with colour 2. Then, Alice fixes the colouring by colouring the cell $(3,3)$ with colour~3.
\end{itemize} \vspace{-0.8cm}
\end{proof}

\vspace{0.2cm}

Propositions \ref{prop_Hnn} and \ref{prop_Hnn_2} refer to the colouring game of the small Hamming graph~$\mathcal{H}_{3,3}$. Let us finish this section with some other results related 
to 
the Hamming graph $\mathcal{H}_{n_1,n_2}$, with $n_1\leq 2\leq n_2$. The next result is useful to this end. This is based on a previous idea of Andres~\cite{Andres2003}, who proved 
that 
\[\chib(K_2\Box TG)\le\Delta(K_2\Box TG),\] 
for any toroidal grid graph $TG$ (see loc. cit. Lemma 17).

\begin{lem}\label{helpforKtwo} Let $G=(V,E)$ be a graph with $|E|\neq\emptyset$. Then,
\[\chib(K_2\Box G)\le \Delta(K_2\Box G))\leq \Delta(G)+1.\]
\end{lem}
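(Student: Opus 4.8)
The plan is to prove the two inequalities separately, the second being essentially a degree computation and the first requiring a strategy for Alice. Write $H=K_2\Box G$ and think of its vertex set as two copies $V_0=\{(0,v):v\in V\}$ and $V_1=\{(1,v):v\in V\}$ of $V$, joined by the perfect matching $M=\{(0,v)(1,v):v\in V\}$; inside each copy the adjacencies are those of $G$. For the second inequality I would simply note that a vertex $(i,v)$ has exactly the $\deg_G(v)$ neighbours $(i,w)$ with $w\sim_G v$ together with its matching partner $(1-i,v)$, so its degree is $\deg_G(v)+1$; hence $\Delta(H)=\Delta(G)+1$ and in particular $\Delta(H)\le\Delta(G)+1$. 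I then set $k:=\Delta(H)=\Delta(G)+1$; since $|E|\neq\emptyset$ we have $\Delta(G)\ge 1$ and so $k\ge 2$, a point used crucially below.

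For the first inequality I would exhibit a winning strategy for Alice in the game $g_B$ on $H$ with $k$ colours (writing $c(\cdot)$ for the colour assigned to a vertex). Fix once and for all a fixed-point-free permutation $\sigma$ of the colour set $[k]$, for instance the cyclic shift $\sigma(c)=(c\bmod k)+1$, which is fixed-point-free precisely because $k\ge 2$. Alice plays a matching-response strategy: whenever Bob colours a vertex of a pair $\{(0,v),(1,v)\}$, she immediately colours the other vertex of that same pair, choosing its colour so as to maintain the invariant that every fully coloured pair satisfies $c(1,v)=\sigma(c(0,v))$. Concretely, if Bob colours $(0,v)$ with $c$ she answers $(1,v)$ with $\sigma(c)$, and if Bob colours $(1,v)$ with $c$ she answers $(0,v)$ with $\sigma^{-1}(c)$. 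Because the board is empty at the start and Alice always completes the pair Bob has just opened, after each of Alice's moves the set of coloured vertices is closed under $M$, i.e.\ every coloured vertex has its partner coloured.

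The two things to verify are that Alice's prescribed answer is always legal and that the game necessarily fills the whole board. I would first observe that partner-closedness already rules out permanent dead-ends: after any Alice move, every still-uncoloured vertex $(0,v)$ has its partner $(1,v)$ uncoloured as well, so one of its $k$ neighbours is uncoloured and it cannot yet see all $k$ colours; hence no uncoloured vertex is ever blocked at the end of Alice's turn, so while uncoloured pairs remain Bob always has a legal move, each Bob--Alice round colours exactly one new pair, and the board is full after $|V|$ rounds. The crux --- the step I expect to be the main obstacle --- is the legality of Alice's response, since a priori the partner she must colour could already see all $k$ colours; this genuine \emph{twin dead-end}, where the two copy-neighbourhoods of $v$ carry the same colour set of size $\Delta$, is exactly what would let Bob win with $\Delta$ colours. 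Here the permutation $\sigma$ does the work. Suppose Bob has just legally coloured $(0,v)$ with $c$; the neighbours of $(1,v)$ are $(0,v)$ and the vertices $(1,w)$ with $w\sim_G v$, and by the invariant every coloured $(1,w)$ has $c(1,w)=\sigma(c(0,w))$. Bob's move was legal, so $c\neq c(0,w)$ for every coloured neighbour $(0,w)$; applying the injection $\sigma$ gives $\sigma(c)\neq c(1,w)$, while $\sigma(c)\neq c=c(0,v)$ because $\sigma$ is fixed-point-free. Thus $\sigma(c)$ avoids every coloured neighbour of $(1,v)$ and Alice's move is legal; the case where Bob colours $(1,v)$ is symmetric using $\sigma^{-1}$. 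Combining the two verifications shows that Alice wins with $k=\Delta(H)$ colours, which is the claimed bound $\chib(H)\le\Delta(H)$.
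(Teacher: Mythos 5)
Your proposal is correct and is essentially the paper's own argument: the same matching-response strategy in which Alice always completes the pair Bob has just opened, maintaining the invariant that partners' colours differ by a fixed-point-free bijection of the colour set (the paper uses the shift $m\mapsto m+a$ in $\mathbb{Z}/\Delta\mathbb{Z}$, which is your cyclic $\sigma$), with the same two verifications that Bob is never stuck and that Alice's forced reply is always legal. The only cosmetic difference is that you phrase the bijection as an abstract fixed-point-free permutation rather than the concrete additive shift.
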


\begin{proof}
Let $\Delta:=\Delta(K_2\Box G)$ and let $M:=\IZ/\Delta\IZ$ be a set of $\Delta$ colours that we consider as additive group. In particular, $\Delta\geq 2$, because $|E|\neq\emptyset$. Let $\{-1,+1\}$ be the vertex set of $K_2$ and let $\mathrm{colour}((a,v))$ denote the colour of a coloured vertex $(a,v)\in K_2\Box G$. Alice's winning strategy with $|M|$ colours is as follows. Whenever Bob colours a vertex $(a,v)\in\{-1,+1\}\times V$ with colour $m\in M$, she colours the vertex $(-a,v)$ with colour $m+a\mod\Delta$. This is different from $m$ because $1\neq0$ in~$M$. Hence, after Alice's moves, for any $w\in V$, either $(+1,w)$ and $(-1,w)$ are both coloured or they are both uncoloured. This means that, whenever Bob colours a vertex $(a,v)$, this vertex has an uncoloured neighbour, namely $(-a,v)$. There are, therefore, at most $\Delta(G)$ coloured neighbours and hence, there is at least one feasible colour for Bob's move. If $(a,v)$ is coloured with colour $m$, then none of the vertices $(a,w)$ with $w\neq v$ and $vw\in E$ is coloured with $m$. By Alice's
strategy, after her moves, for any coloured vertex $(+1,w)$, we have the invariant $\mathrm{colour}((+1,w))+1=\mathrm{colour}((-1,w))\mod\Delta$. Therefore, none of the vertices $(-a,w)$ with $w\neq v$ and $vw\in E$ is coloured with $m+a\mod\Delta$. Thus Alice's move is always feasible.
\end{proof}

%\vspace{0.1cm}

%Let us finish with some other results related to this bi-dimensional case. Let $\Theta=(\pi_1,\pi_2,\pi)$ be an extendable isotopism in $\mathfrak{I}_{n_1,n_2,n}$ and let $a$ and $b$ be two positive integers. In order to study the corresponding $\Theta$-stabilized $(a,b)$-colouring game, we can suppose that $n_1\leq n_2=n$. Besides, Theorem \ref{thm_Board} enables us to ensure that all the cycles in the unique decompositions of $\pi_1$ and $\pi_2$ into product of disjoint cycles must have the same length $l$. Particularly, the case $l=1$ is equivalent to the usual colouring game on the Hamming graph $\mathcal{H}_{n_1,n}$.

\pagebreak[4]

\begin{prop} The next results hold.
\begin{enumerate}[a)]
\item $\chia(\mathcal{H}_{1,n})=\chib(\mathcal{H}_{1,n})=n$.
\item $\chia(\mathcal{H}_{2,n})=n+1$.
\item $\chib(\mathcal{H}_{2,n})=n$ for $n\ge2$.
\end{enumerate}
\end{prop}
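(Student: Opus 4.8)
The plan is to prove each of the three statements about $\mathcal{H}_{n_1,n_2}$ with $n_1\in\{1,2\}$ by identifying the relevant Hamming graph structure and then combining elementary colouring arguments with the general lemma already established. For part (a), observe that $\mathcal{H}_{1,n}=K_n$, since the Cartesian product with a single vertex leaves $K_n$ unchanged. The colouring game on a complete graph is trivial: every vertex is pairwise adjacent, so no colour can repeat, and each of the $n$ vertices must receive a distinct colour. With $n$ colours the game is forced and Alice wins regardless of who starts; with $n-1$ colours the $n$-th vertex cannot be coloured. Hence both $\chia(\mathcal{H}_{1,n})$ and $\chib(\mathcal{H}_{1,n})$ equal $n$, independent of the starting player.

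For part (c), I would invoke Lemma~\ref{helpforKtwo} directly. Note that $\mathcal{H}_{2,n}=K_2\Box K_n$, and for $n\ge 2$ the factor $K_n$ has at least one edge, so the hypothesis $|E|\neq\emptyset$ is met. The lemma gives
\[\chib(K_2\Box K_n)\le \Delta(K_2\Box K_n)\le \Delta(K_n)+1=(n-1)+1=n.\]
For the matching lower bound I would argue $\chib(\mathcal{H}_{2,n})\ge n$: since each of the $n$ columns (copies of $K_n$ across the two rows) forces $n$ distinct colours within a single row, fewer than $n$ colours cannot even complete one row, so Alice cannot win with $n-1$ colours no matter her strategy. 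Combining the two inequalities yields $\chib(\mathcal{H}_{2,n})=n$.

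For part (b), the upper bound $\chia(\mathcal{H}_{2,n})\le n+1$ should follow from the general estimate $\chi(G)\le\Delta(G)+1$ together with an Alice-starting pairing strategy analogous to the one in Lemma~\ref{helpforKtwo}: here $\Delta(\mathcal{H}_{2,n})=n$, and with $n+1$ colours Alice has enough slack to maintain a column-pairing invariant that leaves every cell colourable. The harder direction is the lower bound $\chia(\mathcal{H}_{2,n})\ge n+1$, i.e.\ showing that $n$ colours do \emph{not} suffice when Alice begins. I would exhibit a winning strategy for Bob with $n$ colours: after Alice's first move colouring some cell $(r,c)$ with colour $s$, Bob responds in the \emph{same} column but opposite row, or in the same row, so as to create a cell whose row and column together already exhibit all $n$ colours among its neighbours, making that cell uncolourable.

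I expect the main obstacle to be precisely this lower bound in (b): the delicate point is that the ``$+1$'' over the maximum degree is genuinely needed only because Alice moves first and thereby cedes the tempo advantage that the pairing strategy of Lemma~\ref{helpforKtwo} exploits for Bob. One must verify that Bob can always force a saturated empty cell with exactly $n$ colours when only $n$ are available, typically by a short case analysis on whether Alice's subsequent moves try to pre-empt the threatened cell; the asymmetry between the $g_A$ and $g_B$ variants (compare parts (b) and (c)) is exactly what makes this step non-routine, and it is where I would concentrate the careful bookkeeping.
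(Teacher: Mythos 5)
Your parts (a) and (c) are essentially the paper's argument: (a) is the trivial observation that $\mathcal{H}_{1,n}\cong K_n$, and (c) is exactly the intended application of Lemma~\ref{helpforKtwo} (the lower bound $\chib(\mathcal{H}_{2,n})\ge n$ is immediate since a row of $\mathcal{H}_{2,n}$ is a clique of size $n$, so $\chi_{g_B}\ge\chi\ge n$; note, though, that your parenthetical is backwards --- the \emph{rows} are the copies of $K_n$ and the columns are copies of $K_2$). Also, for the upper bound in (b) no pairing strategy is needed: $\chi_{g_A}(G)\le\Delta(G)+1$ holds for every graph simply because no vertex can ever have more than $\Delta(G)$ forbidden colours, and $\Delta(\mathcal{H}_{2,n})=n$.

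The genuine gap is the lower bound $\chia(\mathcal{H}_{2,n})\ge n+1$ in part (b), which you explicitly leave as ``careful bookkeeping.'' The paper does not prove this either; it cites Bartnicki et al.~\cite{Bartnicki2008}, where the game chromatic number of $K_2\Box K_n$ is determined. Your one-move sketch for Bob does not suffice: a cell of $\mathcal{H}_{2,n}$ has exactly $n$ neighbours, so to kill it with $n$ colours \emph{all} $n$ of its neighbours must receive pairwise distinct colours, which for $n\ge3$ cannot be arranged by a single reply to Alice's opening move. A correct argument must show that Bob can build up several simultaneous threats (partially saturated cells in distinct rows/columns) faster than Alice can defuse them, e.g.\ by repeatedly colouring, opposite an already-used colour class, cells that extend two threats at once; verifying that Alice cannot parry all of these with her single intervening move is precisely the content that is missing. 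As written, your proposal establishes (a), (c), and the upper bound in (b), but not the equality in (b); you should either supply this multi-threat analysis or, as the authors do, invoke~\cite{Bartnicki2008}.
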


 \begin{proof} The first assertion is trivial, whereas the second one was already proven by Bartnicki et al. \cite{Bartnicki2008}. Assertion (c) follows straightforwardly from 
Lemma~\ref{helpforKtwo}.~\end{proof}

\section{Modified game based on principal isotopisms}\label{secmodifiedgame}

The most simple case to study the colouring game introduced in the previous section is that based on an feasible principal isotopism, for which the corresponding symbol permutation is the identity.  Recall that, from 
Proposition~\ref{prop_Board}, this is always an extendable isotopism. Let $\Theta=(\pi_1,\ldots,\pi_d,{\rm id})$ be one such an isotopism. Let $P\in\mathcal{PLH}_{\Theta}$ be a configuration of a given 
$\Theta$-stabilized $(a,b)$-colouring game. Rules 1--3 applied to this partial Latin hyper-rectangle $P$ involve that
\begin{itemize}
\item every cell in a given marked orbit of $P$ is empty or coloured with the same colour, and
\item colouring an empty cell in a marked orbit of $P$ does not give us any new restriction on the possible colours for the elements of other cell orbits.
\end{itemize}
Based on both aspects, let us prove that playing the $\Theta$-stabilized $(a,b)$-colouring game on the Hamming graph $\mathcal{H}_{n_1,\ldots,n_d}$ is equivalent to play a modified colouring game on what we call the \emph{orbit contraction graph} ${\cal H}_{n_1,\ldots,n_d}^{\Theta}$. This comes from the contraction of all those vertices in the Hamming graph $\mathcal{H}_{n_1,\ldots,n_d}$ that are related to cells of the same orbit under the action of $\Theta$. The weight $\omega(v)$ of each vertex $v$ coincides with the cardinality of the corresponding cell orbit. The next result involves the existence of a natural neighbourhood relation on the set of cell orbits in $P$ based on that existing in the original Hamming graph ${\cal H}_{n_1,\ldots,n_d}$ and hence, that the orbit contraction graph is well-defined.

\begin{lem}\label{neighborhoodwelldefined} Let $\Theta\in \mathfrak{I}_{n_1,\ldots,n_d,n}$ and let $P\in\mathcal{PLH}_{\Theta}$. There exists a well-defined neighborhood relation on the orbits of $P$ under the action of $\Theta$, which is based on the neighborhood relation of $\mathcal{H}_{n_1,\ldots,n_d}$.
\end{lem}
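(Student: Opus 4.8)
The plan is to exhibit a single permutation of the vertex set of $\mathcal{H}_{n_1,\ldots,n_d}$ that simultaneously generates the cell orbits and preserves adjacency, and then to read the orbit neighbourhood relation off this automorphism. First I would set $\sigma:=(\pi_1,\ldots,\pi_d)$ and regard it as the map sending each label $(i_1,\ldots,i_d)\in[n_1]\times\ldots\times[n_d]$ to $(\pi_1(i_1),\ldots,\pi_d(i_d))$. Since each $\pi_j$ is a bijection of $[n_j]$, two labels agree in their $j$-th coordinate if and only if their $\sigma$-images do; consequently $(i_1,\ldots,i_d)$ and $(i_1',\ldots,i_d')$ differ in exactly one coordinate precisely when their images under $\sigma$ do. By the adjacency rule of the Hamming graph this says exactly that $\sigma$ is a graph automorphism of $\mathcal{H}_{n_1,\ldots,n_d}$.

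Next I would identify the cell orbits of $\Theta$ with the vertex orbits of the cyclic group $\langle\sigma\rangle$: by the defining formula in (\ref{eq_orbit}) the orbit $\mathfrak{o}_{\Theta}((i_1,\ldots,i_d))$ is exactly the $\langle\sigma\rangle$-orbit of the corresponding vertex, and by Lemma~\ref{lem_card_orbit} these orbits partition the vertex set. The neighbourhood relation on orbits is then declared by saying that two orbits $\mathfrak{o}$ and $\mathfrak{o}'$ are neighbours whenever some vertex of $\mathfrak{o}$ is adjacent in $\mathcal{H}_{n_1,\ldots,n_d}$ to some vertex of $\mathfrak{o}'$.

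The core of the argument is to verify that this relation is a genuine function of the two orbits and not of the chosen representatives, which I would obtain directly from the automorphism property. Because $\sigma$ maps each orbit onto itself and carries edges to edges, for any vertex $v\in\mathfrak{o}$ its image $\sigma^m(v)$ again lies in $\mathfrak{o}$, and, using $\sigma^m(\mathfrak{o}')=\mathfrak{o}'$, the neighbours of $\sigma^m(v)$ inside $\mathfrak{o}'$ are precisely the $\sigma^m$-images of the neighbours of $v$ inside $\mathfrak{o}'$. Hence every vertex of $\mathfrak{o}$ has the same number of neighbours lying in $\mathfrak{o}'$; in particular the existence of an edge between the two orbits does not depend on the representative, the partition into orbits is equitable with respect to adjacency, and the relation is symmetric because adjacency in $\mathcal{H}_{n_1,\ldots,n_d}$ is. This is exactly the well-definedness asserted, so contracting each orbit to a single vertex and joining it to $\mathfrak{o}'$ precisely when $\mathfrak{o}$ and $\mathfrak{o}'$ are neighbours yields the orbit contraction graph.

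The only delicate point I anticipate is the treatment of adjacencies internal to a single orbit, that is, two collinear cells belonging to the same orbit, which do occur, as in $\{(1,1),(2,2),(1,3),(2,4)\}$; these would create a loop at the contracted vertex. I would dispose of this by applying the same invariance argument with $\mathfrak{o}'=\mathfrak{o}$, so that the number of internal neighbours is again constant over the orbit and is recorded by the vertex weight $\omega$ rather than by the adjacency between distinct orbits. This bookkeeping is the main obstacle, but it is routine once the automorphism property of $\sigma$ is established, and it leaves the neighbourhood relation among distinct orbits unambiguously defined.
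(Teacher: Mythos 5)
Your proposal is correct and is essentially the paper's own argument: the paper likewise takes $v'=\sigma^m(v)$ for two representatives of the same orbit and transports a neighbour $w\in\mathfrak{o}_2$ of $v$ to the neighbour $w'=(\pi_1^m(j_1),\ldots,\pi_d^m(j_d))$ of $v'$, which is exactly your observation that $\sigma=(\pi_1,\ldots,\pi_d)$ acts as a graph automorphism preserving each orbit. Your additional remarks on equitability and on intra-orbit adjacencies are correct refinements but not needed for the statement.
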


\begin{proof} Let $\mathfrak{o}_1$ and $\mathfrak{o}_2$ be two distinct orbits of the partial Latin hyper-rectangle $P$ under the action of $\Theta$ and let $v=(i_1,\ldots,i_d)$ and $v'=(i'_1,\ldots,i'_d)$ be two cells in $\mathfrak{o}_1$. There exists a positive integer $m$ such that $\pi^m_k(i_k)=i'_k$, for all $k\leq d$. Then, if there is a cell $w=(j_1,\ldots,j_d)$ in $\mathfrak{o}_2$ such that the edge $vw$ exists in ${\cal H}_{n_1,\ldots,n_d}$, then there is also a cell $w'=(j'_1,\ldots,j'_d)$ in $\mathfrak{o}_2$ such that the edge $v'w'$ exists in ${\cal H}_{n_1,\ldots,n_d}$. Namely, $j'_k=\pi^m_k(j_k)$, for all $k\leq d$.
\end{proof}

For any graph $G=(V,E)$ and every positive integer $l\in\mathbb{N}$, let $G^{\ast l}$ be the vertex-weighted graph having $G$ as base graph and such that every vertex $v\in V$ has weight $\omega(v)=l$. The next result follows then immediately.

\begin{prop}\label{alllcyclesstructure}
Let $\Theta$ be a feasible principal isotopism in $\mathfrak{I}_{n_1,n_2,n}$ with cycle structure $z_{\Theta}=\left(l^{n_1/l},l^{n_2/l},1^n\right)$. Then, the orbit contraction graph ${\cal H}_{n_1,n_2}^{\Theta}$ is isomorphic to the vertex-weighted graph $(\mathcal{H}_{n_1/l,n_2/l}\boxtimes K_l)^{\ast l}$.
\end{prop}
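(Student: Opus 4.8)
The plan is to set up an explicit coordinatization of the cells of $\mathcal{H}_{n_1,n_2}$ adapted to the cycle structure of $\Theta=(\pi_1,\pi_2,\mathrm{id})$, and then to read off both the orbit structure and the neighbourhood relation of Lemma~\ref{neighborhoodwelldefined} directly from it. Since $\pi_1$ decomposes into $n_1/l$ cycles, all of length $l$, I would relabel $[n_1]$ by pairs $(a,r)$, where $a\in[n_1/l]$ indexes the cycle and $r\in\IZ/l\IZ$ records the position within it, so that $\pi_1\colon(a,r)\mapsto(a,r+1)$; likewise I relabel $[n_2]$ by pairs $(b,s)$ with $\pi_2\colon(b,s)\mapsto(b,s+1)$. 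A cell of $\mathcal{H}_{n_1,n_2}$ is then a tuple $((a,r),(b,s))$, and by Lemma~\ref{lem_card_orbit} its orbit under $\Theta$ has exactly $\mathrm{lcm}(l,l)=l$ elements, namely $\{((a,r+m),(b,s+m))\mid m\in\IZ/l\IZ\}$.

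The key observation is that the difference $t:=r-s\in\IZ/l\IZ$ is constant along each orbit, because every application of $\Theta$ increments $r$ and $s$ by the same amount. Hence an orbit is uniquely determined by the triple $(a,b,t)\in[n_1/l]\times[n_2/l]\times(\IZ/l\IZ)$, and this assignment is a bijection onto that index set; a count confirms $\tfrac{n_1}{l}\cdot\tfrac{n_2}{l}\cdot l=\tfrac{n_1n_2}{l}$ orbits, matching the number of vertices of $\mathcal{H}_{n_1/l,n_2/l}\boxtimes K_l$. I would let $\phi$ send the orbit $(a,b,t)$ to the vertex $((a,b),t)$ of $\mathcal{H}_{n_1/l,n_2/l}\boxtimes K_l$, reading $(a,b)$ in $\mathcal{H}_{n_1/l,n_2/l}=K_{n_1/l}\Box K_{n_2/l}$ and $t$ in $K_l$. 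Each orbit has cardinality $l$, which matches the uniform weight $l$ imposed by the $(\cdot)^{\ast l}$ construction, so $\phi$ respects weights.

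It then remains to check that $\phi$ matches the two adjacency relations. First I would compute the orbit-contraction adjacency: two distinct orbits $(a,b,t)$ and $(a',b',t')$ are adjacent exactly when some cell of one is collinear in the rook's graph with some cell of the other, i.e.\ the two cells agree in precisely one coordinate. Agreement in the first coordinate forces $a=a'$ and agreement in the second forces $b=b'$; conversely, if $a=a'$ (resp.\ $b=b'$) one solves for a phase realizing a shared first (resp.\ second) coordinate while keeping the other coordinate distinct. This yields the clean criterion: $(a,b,t)$ and $(a',b',t')$ are adjacent if and only if they are distinct and $a=a'$ or $b=b'$. On the target side, in $K_l$ any two vertices are equal or adjacent, so the $K_l$ factor imposes no constraint, while in $K_{n_1/l}\Box K_{n_2/l}$ the pair $(a,b)$ is equal or adjacent to $(a',b')$ exactly when $a=a'$ or $b=b'$; by the definition of the strong product this gives precisely the same adjacency condition for $((a,b),t)$ and $((a',b'),t')$. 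Hence $\phi$ preserves and reflects adjacency, which completes the proof.

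I expect the main obstacle to lie in the backward direction of the adjacency criterion: given $a=a'$ (or $b=b'$), one must exhibit an explicit collinear pair of cells lying in the prescribed orbits while ensuring the remaining coordinate genuinely differs. This is exactly where distinctness of the orbits must be invoked to exclude the degenerate subcase $t=t'$, and it is the only place requiring a short case distinction; everything else is a direct translation through the coordinatization.
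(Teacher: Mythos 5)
Your proposal is correct and follows essentially the same route as the paper: the pair of cycle indices $(a,b)$ is exactly the paper's ``pair of cycles determining a vertex of $\mathcal{H}_{n_1/l,n_2/l}$'', the phase difference $t$ parametrizes the $l$ mutually adjacent diagonal orbits inside each $(l\times l)$-block (the paper's $K_l^{\ast l}$), and the cross-block adjacency is read off from the quotient Hamming graph. You simply make the isomorphism explicit via the coordinatization and verify the adjacency criterion in both directions, which the paper leaves as a sketch.
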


\begin{proof} Suppose $\Theta=(\pi_1,\pi_2,\pi)$. Each pair of cycles from the cycle decomposition of $\pi_1$ and $\pi_2$ determines a vertex of $\mathcal{H}_{n_1/l,n_2/l}$ that 
corresponds in turn to an $(l\times l)$-square in the rectangle associated with $\mathcal{H}_{n_1,n_2}$. There are $l$ adjacent orbits of size $l$ in such a square with regard to the 
adjacency described in Lemma 
\ref{neighborhoodwelldefined}. This square is, therefore, isomorphic to $K_l^{\ast l}$. The adjacency to other orbits is determined by the structure of $\mathcal{H}_{n_1/l,n_2/l}$. Thus, $\mathcal{H}_{n_1,n_2}^{\Theta}$ is isomorphic to $(\mathcal{H}_{n_1/l,n_2/l}\boxtimes K_l)^{\ast l}$.
\end{proof}

Proposition \ref{alllcyclesstructure} cannot be generalized to higher dimensions. To see it, let $d>2$ and let $\Theta=(\pi_1,\ldots,\pi_d,\pi)\in \mathfrak{I}_{n_1,\ldots,n_d,n}$ be 
a feasible principal isotopism with cycle structure $z_{\Theta}=\left(l^{n_1/l},\ldots,l^{n_d/l},1^n\right)$. Similarly to the reasoning exposed in the proof of the mentioned lemma, 
each tuple of cycles from the cycle decomposition of $\pi_1,\ldots, \pi_d$ determines a vertex of $\mathcal{H}_{n_1/l,\ldots,n_d/l}$ that corresponds in turn to an $(l\times 
\ldots\times l)$-hypercube in $\mathcal{H}_{n_1,\ldots,n_d}$, where there are $l^{d-1}$ cell orbits, all of them of length $l$. Each one of these orbits is only adjacent to those cell 
orbits sharing with itself an axis-parallel hyperplane. As a consequence, if $l>2$, then not all the cell orbits of the hypercube are adjacent and hence, this is not isomorphic to 
$K_{l^{d-1}}^{\ast l}$. Nevertheless, if $l=2$, this adjacency holds. In order to deal with this case, let $H_d=\mathcal{H}_{2,\ldots,2}$ be a $(2\times\ldots\times 2)$-hypercube in $\mathcal{H}_{n_1,\ldots,n_d}$. Its vertices can be considered as $d$-dimensional $0-1$-vectors and hence, we can define $H_d^{+{\rm diag}}$ as the graph obtained from $H_d$ by adding diagonal edges connecting each pair of opposite vertices, that is, vertices with coordinates $(x_1,\ldots,x_d)$ and $(1-x_1,\ldots,1-x_d)$. The next lemma follows immediately from this definition of the hypercube $H_d$, which can indeed be done regardless of the dimension $d$.

\begin{lem}\label{lem_Hd} The next results hold.
\begin{enumerate}[a)]
\item $H_1^{+{\rm diag}}\cong K_2$.
\item $H_2^{+{\rm diag}}\cong K_4$.
\item $H_3^{+{\rm diag}}\cong K_{4,4}$.
%\item $H_{d}^{+{\rm diag}}$ is bipartite if and only if $d$ is odd.
% ---- I think this assertion is wrong. See, for instance, d=5, for which each vertex is adjacent to only six vertices, but there are 32 vertices.
\end{enumerate}
\end{lem}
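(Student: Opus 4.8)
The plan is to handle the three cases by writing each vertex as a $0$--$1$ vector and describing the edge set of $H_d^{+{\rm diag}}$ explicitly: the cube edges are the pairs differing in exactly one coordinate, and the diagonal edges are the pairs differing in all $d$ coordinates. Cases (a) and (b) are then immediate by inspection. For (a), $H_1$ has the two vertices $(0)$ and $(1)$, already joined by a cube edge; the diagonal edge coincides with it, so $H_1^{+{\rm diag}}=H_1\cong K_2$. For (b), $H_2$ is the $4$-cycle $(0,0)-(0,1)-(1,1)-(1,0)-(0,0)$, and the two diagonal edges $(0,0)(1,1)$ and $(0,1)(1,0)$ are exactly the two chords missing from $K_4$; adding them yields all six edges, so $H_2^{+{\rm diag}}\cong K_4$.

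The substantive case is (c), which I would treat by a parity bipartition rather than an exhaustive adjacency check. Split the eight vertices of $H_3$ according to the parity of the number of nonzero coordinates, obtaining the two classes
\[E=\{(0,0,0),(1,1,0),(1,0,1),(0,1,1)\}\quad\text{and}\quad O=\{(1,0,0),(0,1,0),(0,0,1),(1,1,1)\},\]
each of size $4$. A cube edge flips exactly one coordinate and hence reverses the parity, so every cube edge runs between $E$ and $O$. A diagonal edge flips all three coordinates, changing the parity by $3$, which is odd; hence every diagonal edge also runs between $E$ and $O$. Therefore $H_3^{+{\rm diag}}$ is bipartite with parts $E$ and $O$.

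To upgrade "bipartite" to "complete bipartite" I would use a regularity count. In $H_3$ every vertex has degree $3$, and the diagonal contributes to each vertex exactly one further edge: the antipode is distinct from the vertex, and since it differs in all three coordinates it is neither the vertex nor any cube neighbour. Thus $H_3^{+{\rm diag}}$ is $4$-regular. A $4$-regular bipartite graph whose two parts each have exactly $4$ vertices must be complete, since every vertex is adjacent only to the opposite part and, having degree $4$, is adjacent to all four vertices there. Hence $H_3^{+{\rm diag}}\cong K_{4,4}$.

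The only point requiring care is the claim that the diagonal edges respect the bipartition, which hinges on $d=3$ being odd, so that complementing all coordinates reverses the parity of the coordinate sum; for even $d$ the diagonals stay inside the parity classes, which is precisely why $H_2^{+{\rm diag}}$ turns out to be $K_4$ and not a bipartite graph. Beyond this observation no real obstacle arises: the value of the parity viewpoint is that it replaces a case-by-case verification of the thirty-two potential edges by a one-line parity remark together with a degree count.
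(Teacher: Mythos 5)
Your proof is correct; the paper itself gives no argument for this lemma, merely asserting that it ``follows immediately from this definition of the hypercube $H_d$'', so there is no alternative route to compare against. Your treatment of (a) and (b) by inspection, and of (c) via the parity bipartition of the coordinate sum (using that $d=3$ is odd so the antipodal edges cross the bipartition) combined with $4$-regularity on parts of size $4$, is a clean and complete way to supply the omitted verification.
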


The preservation of adjacency that we have previously mentioned enables us to ensure also the next result. From Lemma \ref{lem_Hd}, this is equivalent to Proposition~\ref{alllcyclesstructure} when $d=l=2$.

\begin{prop}\label{mainprophypercube} Let $\Theta$ be a feasible principal isotopism in $\mathfrak{I}_{n_1,\ldots,n_d,n}$ with cycle structure $z_{\Theta}=\left(2^{n_1/2},\ldots,2^{n_d/2},1^n\right)$. Then, the orbit contraction graph ${\cal H}_{n_1,\ldots,n_d}^{\Theta}$ is isomorphic to $(\mathcal{H}_{n_1/2,\ldots,n_d/2}\boxtimes H_{d-1}^{+{\rm diag}})^{\ast 2}$. Particularly, the orbit contraction graph of the hypercube $H_d$ with regard to $\Theta$ is isomorphic to $(H_{d-1}^{+{\rm diag}})^{\ast 2}$.
\end{prop}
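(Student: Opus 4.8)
The plan is to establish the isomorphism by exhibiting, for each axis-parallel $(2\times\ldots\times 2)$-hypercube $H_d$ sitting inside $\mathcal{H}_{n_1,\ldots,n_d}$, how the cell orbits under $\Theta$ distribute and how they are adjacent. Since each $\pi_j$ consists solely of $2$-cycles, by Proposition~\ref{prop_Board} the isotopism $\Theta$ is extendable, so the game (and hence the orbit contraction graph) is well-defined, and by Lemma~\ref{neighborhoodwelldefined} the neighbourhood relation on orbits is well-defined. First I would fix one pair of cycles from each of the $d$ symbol permutations $\pi_1,\ldots,\pi_d$; this selects a vertex of $\mathcal{H}_{n_1/2,\ldots,n_d/2}$ and, correspondingly, a single copy of $H_d$ inside the big array. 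Every cell orbit in this $H_d$ has length exactly $2$ (since $\mathrm{lcm}(2,\ldots,2)=2$), so there are $2^{d-1}$ orbits, each of weight $2$, which explains the $(\cdot)^{\ast 2}$ in the statement.

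The key step is to identify the adjacency structure among these $2^{d-1}$ orbits with $H_{d-1}^{+{\rm diag}}$. Each orbit in $H_d$ is a pair of antipodal cells $\{(x_1,\ldots,x_d),(1-x_1,\ldots,1-x_d)\}$ — antipodal because $\Theta$ simultaneously transposes every coordinate. I would parametrise the $2^{d-1}$ orbits by the first $d-1$ coordinates of a chosen representative (the last coordinate being then free to take either value, exactly as the two orbit elements do), so that the orbit set is naturally indexed by the vertex set of $H_{d-1}$. Then I would check that two orbits are adjacent in the contraction graph precisely when their $(d-1)$-coordinate labels differ in exactly one coordinate (the ordinary Hamming edges, giving $H_{d-1}$) or differ in \emph{all} $d-1$ coordinates (the diagonal edges added in $H_{d-1}^{+{\rm diag}}$). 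The first case comes from a standard single-coordinate flip among the first $d-1$ axes; the diagonal case is the delicate one and is exactly where the antipodal identification matters: flipping only the $d$-th coordinate of one representative connects it to the other element of the antipodal pair indexed by the complementary $(d-1)$-label. This is the preservation-of-adjacency phenomenon referenced just before the statement, and verifying it cleanly is the main obstacle.

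With the local picture $H_d^{\Theta}\cong (H_{d-1}^{+{\rm diag}})^{\ast 2}$ settled, I would globalise. The remaining inter-block adjacencies — between orbits of distinct copies of $H_d$ — are governed entirely by the structure of $\mathcal{H}_{n_1/2,\ldots,n_d/2}$, because two orbits in different $H_d$-blocks are joined exactly when their blocks' defining cycle-tuples differ in a single axis, i.e.\ when the corresponding vertices of $\mathcal{H}_{n_1/2,\ldots,n_d/2}$ are adjacent. Combining the intra-block structure $H_{d-1}^{+{\rm diag}}$ with the inter-block structure $\mathcal{H}_{n_1/2,\ldots,n_d/2}$ via the strong product (which adds an edge whenever vertices agree in one factor and are adjacent in the other, \emph{or} are adjacent in both) yields precisely $\mathcal{H}_{n_1/2,\ldots,n_d/2}\boxtimes H_{d-1}^{+{\rm diag}}$, and the uniform weight $2$ gives the $(\cdot)^{\ast 2}$ decoration. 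This is the same argument skeleton as in Proposition~\ref{alllcyclesstructure}, with $K_l^{\ast l}$ replaced by $(H_{d-1}^{+{\rm diag}})^{\ast 2}$; the key improvement permitting it for all $d$ when $l=2$ is that, by Lemma~\ref{lem_Hd} and the antipodal identification, the $2^{d-1}$ orbits of a single block are now mutually adjacent in the enriched sense, which failed for $l>2$.

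Finally, for the last sentence I would specialise to $n_1=\ldots=n_d=2$, so that $\mathcal{H}_{n_1/2,\ldots,n_d/2}=\mathcal{H}_{1,\ldots,1}$ is a single vertex; the strong product with a one-vertex graph is trivial, leaving exactly one block, and the orbit contraction graph collapses to $(H_{d-1}^{+{\rm diag}})^{\ast 2}$, as claimed.
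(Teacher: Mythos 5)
Your intra-block analysis is sound and is essentially the argument the paper itself sketches in the paragraph preceding the statement: within a single $2\times\cdots\times 2$ block the orbits are the antipodal pairs, they are naturally labelled by $\{0,1\}^{d-1}$ via the representative whose last coordinate is $0$, and two orbits are adjacent exactly when their labels differ in one position (a Hamming edge of $H_{d-1}$) or in all $d-1$ positions (a diagonal edge). This correctly identifies the local picture as $(H_{d-1}^{+{\rm diag}})^{\ast 2}$ and settles the ``particularly'' clause, which is the only part the paper uses afterwards.

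The gap is in the globalisation step, and it cannot be repaired, because the general claim is false. You assert that two orbits in different blocks are joined exactly when the blocks differ in a single axis, and then that this combines with the local picture ``via the strong product''; neither assertion is verified, and they are mutually inconsistent once $H_{d-1}^{+{\rm diag}}$ is not complete. The actual inter-block adjacency is different from both: if blocks $B$ and $B'$ differ in the cycle used on axis $j$, a cell of $B$ and a cell of $B'$ are Hamming-adjacent iff they agree on every axis other than $j$, so the orbits with labels $a,b\in\{0,1\}^{d-1}$ are adjacent iff $a$ and $b$ agree outside position $j$ (when $j<d$), resp.\ iff $a=b$ or $a=\bar b$ (when $j=d$). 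This is strictly sparser than the strong-product condition for $d\ge 3$, and it depends on $j$. Concretely, take $\Theta=((12),(12),(12)(34),{\rm Id})$ acting on $\mathcal{H}_{2,2,4}$, which satisfies the hypotheses: the orbit $\{(1,1,1),(2,2,2)\}$ is not adjacent to the orbit $\{(1,2,3),(2,1,4)\}$, since every cross pair differs in exactly two coordinates; hence the orbit contraction graph is $5$-regular on $8$ vertices, whereas $(\mathcal{H}_{1,1,2}\boxtimes H_2^{+{\rm diag}})^{\ast 2}=(K_2\boxtimes K_4)^{\ast 2}=K_8^{\ast 2}$ is $7$-regular. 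So only the single-block case $n_1=\cdots=n_d=2$ survives, and any attempted proof of the full statement must break exactly where yours glosses over the inter-block adjacency.
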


%\begin{proof}
%It is sufficient to prove the case $k=d$. Consider the additive equivalence relation given by the ideal $(1,\ldots,1)\IZ_2$ on the vertex set $\IZ_2^d$ of~$H_d$. We define a mapping
%\[Z_2^d/(1,\ldots,1)\IZ_2\longrightarrow H_{d-1}^{+{\rm diag}}\]
%by
%\[\{(0,a_2,\ldots,a_d),(1,b_2,\ldots,b_d)\}\longmapsto(a_2,\ldots,a_d),\]
%which is obviously an isomorphism from $H_d^{\Theta}$ onto $H_{d-1}^{+{\rm diag}}$.
%\end{proof}

\paragraph{Modified coloring game.} We would like to play the $\Theta$-stabilized $(a,b)$-colouring game on the orbit contraction graph ${\cal H}_{n_1,\ldots,n_d}^{\Theta}$ in the same way as on the original Hamming graph. To this end, we have to enable the players the equivalence of colouring an empty cell of a marked cell orbit. We have already exposed that the colour in such a move is already determined and that this kind of move does not give us any new restriction on the game. It can therefore be considered as a passing move. Based on this fact, in our new game we play on the orbit contraction graph by keeping in mind the next two possible moves.
\begin{enumerate}[1.]
\item Colour an uncoloured vertex $v$ of the graph and update $\omega(v)=\omega(v)-1$. This corresponds to colour an empty cell of a symbol-free cell orbit in the original game.
\item Update the weight of a coloured vertex as $\omega(v)=\omega(v)-1$, whenever $\omega(v)>0$. This is a passing move that corresponds to colour an empty cell of a marked cell orbit in the original game.
\end{enumerate}
Alice wins if every vertex of the orbit contraction graph is coloured at the end of the game, otherwise Bob wins. We do not impose any requirement about the final weights of the vertices, because, as we have already exposed, any colouring of the graph ${\cal H}_{n_1,\ldots,n_d}^{\Theta}$ involves in an unique way a colouring of the original Hamming graph. The next result follows straightforwardly from the previous arguments.

\begin{lem}[{\bf Orbit Contraction Lemma}]\label{orbitcontraction}
Alice wins the $\Theta$-stabilized $(a,b)$-colouring game on the Hamming graph ${\cal H}_{n_1,\ldots,n_d}$ if and only if she wins the corresponding modified colouring game on the orbit contraction graph ${\cal H}_{n_1,\ldots,n_d}^{\Theta}$.
\end{lem}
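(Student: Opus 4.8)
The plan is to establish the Orbit Contraction Lemma by exhibiting an explicit correspondence between game positions and player moves in the two games, and then verifying that this correspondence carries winning strategies in both directions. First I would set up the position dictionary: every configuration $P\in\mathcal{PLH}_{\Theta}$ reachable during the $\Theta$-stabilized $(a,b)$-colouring game corresponds to a weighted configuration on ${\cal H}_{n_1,\ldots,n_d}^{\Theta}$, where a vertex $v$ (a cell orbit $\mathfrak{o}$) is coloured with symbol $s$ precisely when $\mathfrak{o}$ is a marked orbit whose non-empty cells carry colours in the $\pi$-orbit of $s$, and the weight $\omega(v)$ records how many cells of $\mathfrak{o}$ remain empty. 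In the principal case the symbol along a marked orbit is constant (as recalled in the two bulleted observations preceding Lemma~\ref{neighborhoodwelldefined}), so this assignment is unambiguous; Lemma~\ref{neighborhoodwelldefined} guarantees that the induced adjacency on orbits is exactly the edge relation of the contraction graph, so a colour conflict in ${\cal H}_{n_1,\ldots,n_d}$ occurs if and only if the corresponding conflict occurs on ${\cal H}_{n_1,\ldots,n_d}^{\Theta}$.

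Next I would match the moves. A feasible move in the original game either colours an empty cell of a symbol-free orbit — which, by Proposition~\ref{prop_PLH} and Rule~2, forces the colours of the remaining cells of that orbit and hence corresponds exactly to move type~1 (colour $v$, decrement $\omega(v)$) — or colours an empty cell of an already marked orbit, whose colour is forced by Condition~(\ref{basiceqauto}) and which, as observed in the text, imposes no new restriction; this is move type~2, the passing move that decrements $\omega(v)$ without introducing a fresh colour. Because Alice and Bob each perform a fixed number of turns ($a$ and $b$ respectively) and a turn in either game decrements exactly one unit of available ``cell budget,'' the two move-counts stay in lockstep. I would then check that legality is preserved: a colouring is admissible under Rules~1--3 in ${\cal H}_{n_1,\ldots,n_d}$ iff the corresponding weighted move respects the adjacency constraint on ${\cal H}_{n_1,\ldots,n_d}^{\Theta}$, using Lemma~\ref{neighborhoodwelldefined} in one direction and Proposition~\ref{extensionsareuseful} (every colour of an extension can be realised by a feasible move) in the other.

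Finally I would translate winning conditions and conclude. By Lemma~\ref{lem_AliceWins}, once the last symbol-free orbit is coloured the entire board is forced to a Latin hyper-rectangle, so Alice wins the original game iff every orbit eventually becomes marked, which is precisely the condition that every vertex of ${\cal H}_{n_1,\ldots,n_d}^{\Theta}$ is coloured; the absence of any final-weight requirement in the modified game reflects the fact that the forced passing moves always complete successfully. Given a winning strategy for Alice in one game, the dictionary transports each of her responses to a legal response of the same type in the other game against the correspondingly translated Bob move, and preserves the terminal outcome; running this transport in both directions yields the stated equivalence.

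The main obstacle I expect is the bookkeeping of move-counts and the passing move. One must argue carefully that translating a type-2 (passing) move does not let a player ``waste'' a turn illegitimately or fall out of sync — in particular that a player forced to make a passing move in one game always has a legal passing move available in the other, and that the $(a,b)$-turn structure never forces a player to colour in one game while the mirror game would require a pass (or vice versa). This is where one leans on the observation that marked-orbit colourings are forced and restriction-free, so a passing move is always available exactly when a marked orbit with an empty cell exists, which is a condition visible identically in both formulations.
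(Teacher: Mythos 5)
Your proposal is correct and follows exactly the route the paper intends: the paper gives no explicit proof beyond ``follows straightforwardly from the previous arguments,'' and your position dictionary, move correspondence (type~1 for symbol-free orbits, type~2 passing moves for forced colourings in marked orbits), and use of Lemma~\ref{neighborhoodwelldefined}, Proposition~\ref{extensionsareuseful} and Lemma~\ref{lem_AliceWins} are precisely those arguments spelled out in detail. Your closing discussion of why the turn bookkeeping stays in lockstep is a genuine improvement in rigour over the paper's one-line dismissal, but it is not a different approach.
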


\vspace{0.1cm}

\begin{thm} Let $\Theta=(\pi_1,\pi_2,\pi)$ be an extendable isotopism in $\mathfrak{I}_{n_1,n_2,n}$ with cycle structure $z_{\Theta}=(l,l,1^l)$. Then, $\chi_g^{\Theta}(\mathcal{H}_{n_1,n_2})=l$.
\end{thm}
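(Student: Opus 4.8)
The plan is to pass to the orbit contraction graph. The cycle structure $z_{\Theta}=(l,l,1^l)$ forces $n_1=n_2=n=l$, with $\pi_1$ and $\pi_2$ being single $l$-cycles and $\pi=\mathrm{id}$; in particular $\Theta$ is a feasible principal isotopism of exactly the type treated in Proposition~\ref{alllcyclesstructure}. Applying that proposition with $n_1/l=n_2/l=1$ shows that ${\cal H}_{n_1,n_2}^{\Theta}$ is isomorphic to $(\mathcal{H}_{1,1}\boxtimes K_l)^{\ast l}$, and since $\mathcal{H}_{1,1}=K_1$ this is just $K_l^{\ast l}$, the complete graph $K_l$ in which every vertex carries weight $l$. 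By the Orbit Contraction Lemma (Lemma~\ref{orbitcontraction}) it therefore suffices to analyse the modified colouring game on $K_l^{\ast l}$.

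For the lower bound I would invoke item (c) of Lemma~\ref{lem_Bounds}: here the symbol count satisfies $n=\max\{n_1,n_2\}=l$, so that lemma gives $l\le\chi_g^{\Theta}$. Intuitively this is clear, because the base graph $K_l$ of the orbit contraction graph is complete on $l$ vertices and hence needs $l$ distinct colours for any proper colouring. For the upper bound I would show that with exactly $l$ colours \emph{every} play ends with a complete colouring, so that Alice wins irrespective of strategy or of who begins. The key observation is that the colouring on $K_l^{\ast l}$ can never get stuck: an uncoloured vertex of $K_l$ has at most $l-1$ neighbours, which are pairwise adjacent and hence carry pairwise distinct colours, so at most $l-1$ of the $l$ colours are forbidden and at least one colour is always free. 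Thus a type-1 move is legal whenever an uncoloured vertex of positive weight exists, and a type-2 (passing) move is legal on any coloured vertex of positive weight; consequently a legal move exists as long as the total weight $\sum_v\omega(v)$ is positive. The game therefore lasts exactly $l^2$ moves and can terminate only when every weight is $0$. Since a vertex's weight can fall below its initial value $l$ only after the vertex has been coloured (type-2 moves require a coloured vertex), all weights being $0$ forces every vertex to be coloured. Hence Alice wins with $l$ colours, giving $\chi_g^{\Theta}\le l$, and combined with the lower bound this yields $\chi_g^{\Theta}=l$.

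I expect the only genuinely delicate point to be the upper-bound bookkeeping, namely establishing simultaneously that (i) no uncoloured vertex is ever blocked and (ii) the game cannot halt while some vertex is still uncoloured. Both claims reduce to the two elementary invariants isolated above — a free colour always exists in $K_l$ under $l$ colours, and positive total weight guarantees a legal move — and once these are stated the remaining verification is routine and, notably, independent of which player moves first.
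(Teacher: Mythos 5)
Your proposal is correct and follows essentially the same route as the paper: both reduce the game via the Orbit Contraction Lemma to the modified game on the orbit contraction graph, whose base graph is $K_l$ (with all weights equal to $l$), from which the value $l$ follows. The paper states this as ``a trivial consequence''; you simply spell out the bookkeeping (a free colour always exists on $K_l$ with $l$ colours, and zero total weight forces every vertex to be coloured), which is a faithful elaboration rather than a different argument.
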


\begin{proof}
This is a trivial consequence of the orbit contraction lemma, since the base graph of the orbit contraction graph ${\cal H}_{l,l}^{\Theta}$ is isomorphic to $K_l$. Besides, the result only depends on the cycle structure under consideration because of Proposition \ref{prop_CS}.
\end{proof}

\vspace{0.2cm}

Let us finish our study with the discussion of the $\Theta$-stabilized $(a,b)$-game chromatic number of the hypercube $H_d$, for $d\leq 4$. Our motivation to study the game on the hypercube comes from the fact that the orbit contraction graph of the $d$-dimensional hypercube encodes, as we feel, all the complicated structural information of every hyper-rectangle with regard to a principal autotopism where all the cycles in each permutation $\pi_i$ have the same length. Therefore, the game on the hypercube is not only a toy problem, moreover we learn a lot about the game in the higher dimensional case in general.

\begin{thm} Let $a$, $b$ and $d$ be three positive integers and let 
\[\Theta=((12),\ldots,(12),(1)(2))\] 
be a $(d+1)$-dimensional isotopism. The next results hold.
\begin{enumerate}[i)]
\item ${}^{(a,b)}\chi_g^{\Theta}(H_2)={}^{(a,b)}\chi_g(K_2^{\ast2})=2$.
\item ${}^{(a,b)}\chi_g^{\Theta}(H_3)={}^{(a,b)}\chi_g(K_4^{\ast2})=4$.
\item If $d=4$, then
$${}^{(a,b)}\chi_{g_X}^{\Theta}(H_4)={}^{(a,b)}\chi_{g_X}(K_{4,4}^{\ast2})= \left\{\begin{array}{ll}
2,&{\rm if}\ X=A \text{ and } a\geq 2,\\
\min\{b+2,5\},&{\rm if}\ X=A \text{ and } a=1,\\
\min\{b+1,5\},&{\rm if}\ X=B \text{ and } a=1.
\end{array}\right.$$
\end{enumerate}
\end{thm}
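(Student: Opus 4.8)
The plan is to push the whole problem through the Orbit Contraction Lemma (Lemma~\ref{orbitcontraction}) and then solve three small weighted games by hand. First I would observe that $\Theta=((12),\ldots,(12),(1)(2))$ is a feasible principal isotopism whose symbol permutation is the identity on $[2]$, with cycle structure $(2^{n_1/2},\ldots,2^{n_d/2},1^n)$ for $n_1=\cdots=n_d=n=2$. Hence Proposition~\ref{mainprophypercube} applies and the orbit contraction graph of $H_d$ is $(H_{d-1}^{+{\rm diag}})^{\ast 2}$; combined with Lemma~\ref{lem_Hd} this gives base graphs $K_2$, $K_4$ and $K_{4,4}$ for $d=2,3,4$, i.e.\ the weighted graphs $K_2^{\ast 2}$, $K_4^{\ast 2}$ and $K_{4,4}^{\ast 2}$. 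Lemma~\ref{orbitcontraction} then yields the first equality in each of (i)--(iii), so it only remains to compute the ${}^{(a,b)}\chi_{g_X}$ of these three graphs in the modified game, remembering that weight $2$ means each vertex, once coloured, grants exactly one extra (harmless) passing move.

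For (i) and (ii) the base graphs are cliques, so the value is forced and independent of $a$, $b$ and of who starts. In $K_2^{\ast 2}$ the two adjacent vertices require two distinct colours, which $2$ colours always achieve and $1$ never does, giving $2$ (matching the floor $n=2$ of Lemma~\ref{lem_Bounds}(c)). In $K_4^{\ast 2}$ the four mutually adjacent vertices require four distinct colours: with $n'=4$ any partially coloured vertex has at most three coloured neighbours and thus a free colour, so nothing is ever blocked and all four get coloured, whereas with $n'=3$ the fourth vertex is unavoidably blocked once the first three receive their necessarily distinct colours. This is why I would write $\chi_g$ without the subscript $X$ in these two cases.

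The crux is $K_{4,4}^{\ast 2}$, with parts $L,R$ of size $4$. I would isolate a colour-accounting lemma: writing $S_L,S_R$ for the colour sets occurring in $L,R$, cross-adjacency forces $S_L\cap S_R=\emptyset$, hence $|S_L|+|S_R|\le n'$, and an uncoloured vertex of $L$ (resp.\ $R$) is blocked exactly when $S_R$ (resp.\ $S_L$) equals all of $[n']$. The decisive consequence is a \emph{safe-forever} principle: as soon as \emph{both} parts contain a coloured vertex we have $|S_L|,|S_R|\le n'-1<n'$ permanently, so no vertex can ever be blocked and Alice wins. Therefore Bob can win only by saturating one part with all $n'$ distinct colours while the other part is still empty, and since a part has only $4$ vertices this is impossible once $n'=5$, which is exactly where the cap $\min\{\cdot,5\}$ comes from.

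The three sub-cases then reduce to counting turns. If $X=A$ and $a\ge 2$, Alice colours one vertex in each part on her opening turn (say $l_1=1$, $r_1=2$), entering the safe regime at once, so $n'=2$ suffices and is optimal. If $X=A$ and $a=1$, Alice's single opening move commits one part, after which Bob spends a full turn piling colours into that same part, reaching $\min\{b+1,4\}$ distinct colours; thus Bob saturates (and wins) precisely when $n'\le\min\{b+1,4\}$, while for $n'\ge\min\{b+2,5\}$ Alice secures the untouched part on her very next turn, giving the value $\min\{b+2,5\}$. If $X=B$ and $a=1$, Bob can saturate already on his opening turn exactly when $n'\le\min\{b,4\}$, and otherwise Alice touches the untouched part on her first reply, yielding $\min\{b+1,5\}$. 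The step I expect to require the most care is making this counting airtight: one must verify that the weight-$2$ passing moves never force a player into an unwanted colouring (for $a=1$ Alice is never driven past her one planned move, and any forced extra moves only help her), and that Bob cannot simultaneously saturate one part and mount an independent threat, both of which are controlled by the colour-accounting lemma above.
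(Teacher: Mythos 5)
Your proposal is correct and follows essentially the same route as the paper: the first equalities via Proposition~\ref{mainprophypercube}, Lemma~\ref{lem_Hd} and the Orbit Contraction Lemma, the clique cases by forced colourings, and for $K_{4,4}^{\ast2}$ exactly the paper's strategies (Alice touches both bipartite parts within her first two turns, Bob tries to saturate a single part with distinct colours). Your colour-accounting ``safe-forever'' lemma and the turn-counting are just a more explicit write-up of the argument the paper leaves terse.
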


\begin{proof}
In the three cases, the first equality follows from Lemma \ref{lem_Hd} and Proposition~\ref{mainprophypercube}, whereas the second one is trivial in $(i)$ and $(ii)$. In $(iii)$, Alice's winning strategy is based on guaranteeing in her first or second move that both bipartite sets are coloured, whereas Bob's winning strategy consists of colouring the vertices of the same bipartite set with different colours. In particular, if $X=A$, then he should select the same bipartite set that Alice has chosen.
\end{proof}

\section{Final remarks and further studies}

In this paper we have introduced a variant of the $(a,b)$-colouring game of the Hamming graph $\mathcal{H}_{n_1,\ldots,n_d}$ for which each position  corresponds to a partial Latin hyper-rectangle having a fixed feasible isotopism $\Theta$ in its autotopism group. We have examined this variant by means of the $\Theta$-stabilized $(a,b)$-game chromatic number, which only depends on the cycle structure of the isotopism $\Theta$. As a first step in the widely spectrum of cases on which this colouring game can be based, several results have been exposed in case of dealing with the bi-dimensional and the hypercube cases. Nevertheless, it is required a deeper study based on the known distribution of isotopism of partial Latin hyper-rectangles according to their cycle structures. The bi-dimensional case, for which such a distribution is known for (partial) Latin squares of small order \cite{Falcon2012, Falcon2013, Stones2012}, is established as an immediate further work.

\begin{prob}
Determine ${}^{(a,b)}\chi_g^{\Theta}$ for $\pi={\rm Id}$ and $d=2$.
\end{prob}

\begin{prob}
Determine ${}^{(a,b)}\chi_g^{\Theta}$ for $\pi={\rm Id}$ and $n=n_1=\ldots=n_d=2$.
\end{prob}

\begin{prob}
Determine ${}^{(a,b)}\chi_g^{\Theta}$ for $\pi={\rm Id}$ and $n=n_1=\ldots=n_d=p$, where $p$ is an odd prime.
\end{prob}

\begin{prob}
Find a unifying description of Hamming graphs w.r.t.\ certain isotopisms.
\end{prob}

Furthermore, our studies motivate to further examine the modified colouring game on arbitrary weighted graphs that are not necessarily based on Latin hyper-rectangles.

\begin{prob}
Determine the maximum $(a,b)$-game chromatic numbers for vertex weighted graphs from interesting classes of graphs (such as forest, outerplanar, planar, or $k$-degenerate graphs etc.) 
assuming a fixed upper bound on the vertex weights.
\end{prob}


\begin{thebibliography}{00}
\bibitem{Andres2003} S. D. Andres, Spieltheoretische Kantenf\"{a}rbungsprobleme auf W\"{a}ldern und verwandte Strukturen (in German). Diploma Thesis, Universit\"{a}t zu K\"{o}ln, 2003.

\bibitem{andresforest} S. D. Andres, The game chromatic index of forests of maximum degree $\Delta\ge5$, Discrete Applied Math. 154 (2006), 1317--1323

\bibitem{Andres2009} S. D. Andres, Asymmetric directed graph coloring games, Discrete Math. 309 (2009), 5799--5802.

\bibitem{andresgpg} S. D. Andres, Game-perfect graphs, Math. Methods Oper. Res. 69 (2009), 235--250

\bibitem{Bartnicki2008} T. Bartnicki, B. Bre\v{s}ar, J. Grytczuk, M. Kov\v{s}e, Z. Miechowicz, and I. Peterin, Game chromatic number of Cartesian product graphs. Electron. J. Comb. 15 (2008), \#R72.

\bibitem{bartnickietal} T. Bartnicki, J. Grytczuk, H. A. Kierstead, and X. Zhu, The map-coloring game, Am. Math. Mon. 114 (2007), 793--803

\bibitem{Besharati2016} N. Besharati, L. Goddyn, E. S. Mahmoodian, M. Mortezaeefar, On the chromatic number of Latin square graphs, Discrete Math. 339 (2016) 2613--2619.

\bibitem{Bodlaender1991} H. L. Bodlaender, On the complexity of some coloring games, Int. J. Found. Comput. Sci. 2 (1991) 133--147.

\bibitem{Bose1963} R. C. Bose, Strongly regular graphs, partial geometries and partially balanced designs, Pacific J. Math. 13 (1963) 389--419.

\bibitem{diestel} R. Diestel, Graph Theory, Springer 2000

\bibitem{Denes1991} J. D{\'e}nes and A. D. Keedwell, Latin squares: New developments in the theory and applications, Annals of Discrete Mathematics, vol. 46. NorthHolland Publishing Co., Amsterdam, 1991.

\bibitem{Falcon2012} R. M. Falc\'on, Cycle structures of autotopisms of the Latin squares of order up to 11, Ars Comb. 103 (2012) 239--256.

\bibitem{Falcon2013} R. M. Falc\'on, The set of autotopisms of partial Latin squares. Discrete Math. 313 (2013), 1150--1161.

\bibitem{Falcon2015} R. M. Falc\'on, R. J. Stones, Classifying partial Latin rectangles, Electron. Notes Discrete Math. 49 (2015), 765--771.

\bibitem{gardner} M. Gardner, Mathematical games, Scientific American (April, 1981), 23

\bibitem{Harary1969} F. Harary. {\em Graph Theory}, Addison Wesley, Reading, Mass., 1969.

\bibitem{kierstead} H. A.  Kierstead, A  simple  competitive  graph  coloring  algorithm, J. Comb. Theory B 78 (2000), 57--68

\bibitem{Kierstead2005} H. A. Kierstead, Asymmetric graph coloring games, J. Graph Theory 48 (2005), 169--185.

\bibitem{McKay2005}  B. D. McKay, A. Meynert and W. Myrvold, Small Latin Squares, Quasigroups and Loops, J. Combin. Des. 15 (2007), no. 2, 98-- 119.

\bibitem{Sade1968} A. A. Sade, Autotopies des quasigroupes et des syst\`emes associatives, Arch. Math. 4 (1968), no. 1, 1--23.

\bibitem{Schlund2011} M. Schlund, Graph decompositions, Latin squares, and games. Diploma Thesis, TU M\"{u}nchen, 2011.

\bibitem{Stones2012} D. S. Stones, P. Vojt\v{e}chovsk\'y and I. M. Wanless, Cycle structure of autotopisms of quasigroups and Latin squares, J. Combin. Des. 20 (2012) no. 5, 227--263.

\bibitem{zhurefined} X. Zhu, Refined  activation  strategy  for  the  marking  game, J. Comb. Theory B 98 (2008), 1--18

\bibitem{zhucartesian} X. Zhu, Game  coloring  the  Cartesian  product  of  graphs, J. Graph Theory 59 (2008), 261--278

%%\bibitem{zhu} Zhu + some More on graph colouring games

\end{thebibliography}
\end{document}